\documentclass[pdflatex,sn-mathphys-num,oneside]{sn-jnl}
\makeatletter
\@twosidefalse
\@mparswitchfalse
\makeatother
\usepackage{amssymb,mathtools,pifont}
\usepackage{algorithm}
\usepackage{algpseudocode}
\usepackage{booktabs,array,tabularx}
\usepackage{enumitem,microtype,needspace,placeins}
\usepackage[nameinlink,noabbrev]{cleveref}

\theoremstyle{thmstyleone}
\newtheorem{theorem}{Theorem}[section]
\newtheorem{lemma}[theorem]{Lemma}

\theoremstyle{thmstyletwo}
\newtheorem{remark}[theorem]{Remark}
\theoremstyle{thmstylethree}
\newtheorem{definition}[theorem]{Definition}
\newtheorem{assumption}[theorem]{Assumption}
\newcommand{\R}{\mathbb R}
\newcommand{\eps}{\varepsilon}
\newcommand{\ip}[2]{\langle #1,#2\rangle}
\newcommand{\norm}[1]{\lVert #1\rVert}

\allowdisplaybreaks[4]
\title[Near-Optimal Single-Loop Extragradient Method]
{Near-Optimal Single-Loop Predictor--Corrector Extragradient Method
	for Strongly Convex--Strongly Concave Minimax Optimization}
\author[1]{\fnm{Minhao} \sur{Zhang}}
\email{zhangminhao@shu.edu.cn}
\author*[1]{\fnm{Zi} \sur{Xu}}
\email{xuzi@shu.edu.cn}
\affil*[1]{
	\orgdiv{Department of Mathematics},
	\orgname{Shanghai University},
	\orgaddress{
		\city{Shanghai},
		\postcode{200444},
		\country{People's Republic of China}
	}
}
\begin{document}
	\abstract{
We study smooth strongly convex--strongly concave minimax optimization
in the deterministic unconstrained setting, without assuming a bilinear
or separable structure.
Although existing multi-loop methods attain near-optimal
condition-number dependence, standard single-loop methods generally
exhibit a substantial complexity gap.
To close this gap, we propose the Single-Loop Predictor--Corrector
Extragradient Method with Damped Momentum (PCE-DM), which combines an
extragradient prediction--correction scheme with a novel auxiliary
feedback recursion for the weaker-curvature variable.
PCE-DM uses fixed parameters and two new full-gradient evaluations per
iteration after one initialization query, while requiring no inner
solves, accuracy schedules, or staged restarts.
We develop a Lyapunov analysis that controls the predictor--corrector
mismatch through corrected-gradient increments and establish
last-iterate linear convergence.
Specifically, PCE-DM computes an $\varepsilon$-accurate relative
solution, measured by the squared Euclidean distance to the saddle
point, within
$\mathcal{O}\!\left(\sqrt{\kappa_x\kappa_y}
\log(2\kappa_x\kappa_y/\varepsilon)\right)$ full-gradient queries.
This result closes the condition-number complexity gap between standard
single-loop methods and near-optimal multi-loop methods, matching the
known lower-bound order up to logarithmic factors while retaining fixed,
explicit single-loop updates.
Numerical experiments on regularized linear regression and AUC
maximization demonstrate the computational efficiency of PCE-DM.
	}
	\keywords{
		strongly convex--strongly concave minimax optimization,
		single-loop algorithms, predictor--corrector extragradient method
	}
	
	\maketitle
	
	\section{Introduction}\label{sec:introduction}	
	We consider the smooth strongly convex--strongly concave minimax problem
	\begin{equation}\label{eq:intro-problem}
		\min_{x\in\mathbb R^n}\max_{y\in\mathbb R^p} f(x,y),
	\end{equation}
	where $f$ has an $L$-Lipschitz continuous full gradient, is
	$\mu_x$-strongly convex in $x$, and is $\mu_y$-strongly concave in $y$.
	We focus on the deterministic unconstrained setting and do not impose a
	bilinear or separable structure on $f$.
	Strongly convex--strongly concave minimax formulations arise in
	regularized empirical risk minimization and regularized policy
	evaluation in reinforcement learning~\cite{zhangxiao2017,du2017}.
	Related regularized saddle-point models appear in batch policy learning,
	stochastic composite optimization, and mixed-strategy Nash equilibrium
	estimation~\cite{zhanggeneralization2021}.
	
	Let $\kappa_x=L/\mu_x$ and $\kappa_y=L/\mu_y$.
	For general smooth objectives, multi-loop methods based on approximately
	solving auxiliary subproblems attain
	$\widetilde{\mathcal{O}}(\sqrt{\kappa_x\kappa_y})$ first-order
	complexity~\cite{lin2020,wangli2020,kovalev2022}, matching the lower-bound
	order $\widetilde{\Omega}(\sqrt{\kappa_x\kappa_y})$ for the corresponding
	first-order algorithm classes~\cite{zhang2021,ibrahim2020}.
	In contrast, standard single-loop methods have complexity
	$\widetilde{\mathcal{O}}(\kappa_x\kappa_y)$ for GDA and
	$\widetilde{\mathcal{O}}(\max\{\kappa_x,\kappa_y\})$ for EG and
	OGDA~\cite{lee2024,mokhtari2020}, leaving a gap from the lower-bound
	condition-number dependence.
	This work closes this gap, up to logarithmic factors, by developing the
	Single-Loop Predictor--Corrector Extragradient Method with Damped
	Momentum (PCE-DM).
	For the last-iterate squared-distance criterion, PCE-DM attains
	$\widetilde{\mathcal{O}}(\sqrt{\kappa_x\kappa_y})$ complexity using a
	fixed sequence of explicit full-gradient updates.
	Its construction combines an auxiliary feedback recursion with a
	stability argument that controls the residual induced by explicit
	discretization.
	
	\subsection{Related work}\label{subsec:related-work}
	
	We review first-order methods for strongly convex--strongly concave
	minimax problems, focusing on general smooth objectives without a
	bilinear structural assumption.
	We then briefly discuss methods specialized to bilinearly coupled
	separable objectives and the relevant lower bounds.
	Table~\ref{tab:related-work} summarizes the complexity orders and update
	structures of the methods most closely related to this work.
	Here, $L$ is both the full-gradient smoothness constant and an upper
	bound on the block smoothness constants.
	A single-loop method performs a fixed sequence of explicit updates
	without approximately solving inner subproblems.
	The notation $\widetilde{\mathcal{O}}$ and $\widetilde{\Omega}$
	suppresses logarithmic factors in the condition numbers and accuracy.
	
	\paragraph{Single-loop methods for general smooth objectives}
	Representative single-loop methods include gradient descent--ascent
	(GDA)~\cite{zamGDA2024}, extragradient
	(EG)~\cite{korpelevich1976}, and optimistic gradient descent--ascent
	(OGDA)~\cite{popov1980}.
	Under the common smoothness bound, their squared-distance iteration
	complexities are
	$\widetilde{\mathcal{O}}(\kappa_x\kappa_y)$ for GDA with
	curvature-matched block stepsizes~\cite[Corollary~3.2]{lee2024} and
	$\widetilde{\mathcal{O}}(\max\{\kappa_x,\kappa_y\})$ for EG and
	OGDA~\cite[Theorems~7 and~4]{mokhtari2020}.
	These methods have explicit updates and low per-iteration costs, but
	their condition-number dependence does not match the known lower bound,
	particularly when $\kappa_x$ and $\kappa_y$ are highly imbalanced.
	
	\paragraph{Multi-loop methods for general smooth objectives}
	Improved condition-number dependence has been obtained through
	multi-loop or inexact-proximal constructions that approximately solve
	auxiliary subproblems.
	Minimax-APPA~\cite[Theorem~9]{lin2020} combines accelerated
	proximal-point iterations with an accelerated solver for each minimax
	subproblem. Its guarantee is established over bounded convex domains,
	whose diameters enter the logarithmic factors.
	Proximal Best Response~\cite[Theorem~3]{wangli2020} distinguishes
	interaction smoothness from within-variable smoothness and, under the
	common bound $L$, attains
	$\widetilde{\mathcal{O}}(\sqrt{\kappa_x\kappa_y})$ complexity through
	nested updates.
	FOAM~\cite{kovalev2022} employs a pointwise-conjugate reformulation and
	inexact proximal updates to attain the optimal first-order complexity.
	For composite non-bilinear saddle-point problems, the accelerated
	sliding method in~\cite{alkousa2020accelerated} exploits separately
	available component gradients and inexact oracles.
	Thus, the optimal condition-number order is attainable for general
	smooth objectives, but existing constructions achieving this order
	require nested iterations, inexact subproblem solutions, or additional
	oracle structures.
	
	\paragraph{Lower bounds}
	The lower bounds in~\cite{zhang2021,ibrahim2020} establish the benchmark
	$\widetilde{\Omega}(\sqrt{\kappa_x\kappa_y})$ after constant-factor
	normalization of the block smoothness parameters.
	They apply to the respective classes of first-order algorithms, with
	the dimension allowed to grow, and do not distinguish between
	single-loop and nested implementations.
	Consequently, these results determine the target condition-number
	dependence but do not imply that it can be attained by explicit
	single-loop updates.
	PCE-DM realizes this dependence, up to logarithmic factors, without
	inner solves or restarts.
	
	\paragraph{Methods for bilinearly coupled objectives}
	For the more structured objective
	$a(x)+\langle Ax,y\rangle-b(y)$, where $a$ and $b$ are smooth and
	strongly convex, the Lifted Primal-Dual (LPD) method~\cite{lpd2022} and
	the Accelerated Primal-Dual Gradient (APDG) method~\cite{apdg2022}
	attain optimal rates with single-loop updates.
	Their bounds distinguish component smoothness from coupling constants
	and reduce to
	$\widetilde{\mathcal{O}}(\sqrt{\kappa_x\kappa_y})$ under the common
	bound $L$.
	These methods exploit the bilinear coupling through component-gradient
	and matrix-vector oracles.
	Primal--dual extragradient schemes also provide sharp guarantees for
	broader separable models~\cite{jin2022}.
	In contrast, PCE-DM uses only full-gradient queries and does not require
	the objective to have a bilinearly coupled separable form.
	\begin{table}
		\centering
		\caption{Complexity orders and update structures for general smooth
			strongly convex--strongly concave minimax optimization.}
		\label{tab:related-work}
		\small
		\newcommand{\tablecheck}{{\large\ding{51}}}
		\newcommand{\tablecross}{{\large\ding{55}}}
		\setlength{\tabcolsep}{3pt}
		\renewcommand{\arraystretch}{1.35}
		\begin{tabular}{@{}p{0.47\textwidth}c c@{}}
			\toprule
			Method
			& Complexity
			& Single-loop\\
			\midrule
			GDA~\cite[Cor.~3.2]{lee2024}
			& $\widetilde{\mathcal{O}}(\kappa_x\kappa_y)$
			& \tablecheck\\
			EG~\cite[Thm.~7]{mokhtari2020}
			& $\widetilde{\mathcal{O}}(\max\{\kappa_x,\kappa_y\})$
			& \tablecheck\\
			OGDA~\cite[Thm.~4]{mokhtari2020}
			& $\widetilde{\mathcal{O}}(\max\{\kappa_x,\kappa_y\})$
			& \tablecheck\\
			\midrule
			Minimax-APPA~\cite[Thm.~9]{lin2020}
			& $\widetilde{\mathcal{O}}(\sqrt{\kappa_x\kappa_y})$
			& \tablecross\\
			Proximal Best Response\newline
			\cite[Thm.~3]{wangli2020}
			& $\widetilde{\mathcal{O}}(\sqrt{\kappa_x\kappa_y})$
			& \tablecross\\
			FOAM~\cite[Thm.~3, Cor.~1]{kovalev2022}
			& $\widetilde{\mathcal{O}}(\sqrt{\kappa_x\kappa_y})$
			& \tablecross\\
			\midrule
			\textbf{PCE-DM}
			& $\widetilde{\mathcal{O}}(\sqrt{\kappa_x\kappa_y})$
			& \tablecheck\\
			\midrule
			Lower bounds~\cite{zhang2021,ibrahim2020}
			& $\widetilde{\Omega}(\sqrt{\kappa_x\kappa_y})$
			& ---\\
			\bottomrule
		\end{tabular}
	\end{table}
	
	\subsection{Our contributions}\label{subsec:contributions}
	Our main contributions are as follows.
	\begin{enumerate}[label=(\arabic*),leftmargin=*,itemsep=0.6em]
		\item \emph{A single-loop algorithm with near-optimal complexity.}
		We propose PCE-DM for smooth strongly convex--strongly concave minimax
		problems without requiring a bilinearly coupled separable structure.
		The method uses fixed parameters and two new full-gradient evaluations
		per iteration, requiring neither inner solves nor restarts.
		With appropriate parameter choices, its last iterate is an
		$\varepsilon$-accurate relative solution in the sense of
		Definition~\ref{def:accuracy} after
		\begin{equation}\label{eq:intro-complexity}
			\mathcal{O}\!\left(
			\sqrt{\kappa_x\kappa_y}
			\log\frac{2\kappa_x\kappa_y}{\varepsilon}
			\right)
		\end{equation}
		full-gradient queries.
		Thus, PCE-DM matches the known lower-bound condition-number dependence,
		up to logarithmic factors, while retaining an explicit single-loop
		update structure.
		
		\item \emph{A new Lyapunov analysis.}
		We construct a Lyapunov function that combines the function-value gap,
		squared gradient-direction norms, momentum terms, and distances to the
		saddle point.
		Its lower bound controls the squared distance to the saddle point,
		while its one-step descent estimate produces negative
		gradient-increment terms that absorb the predictor--corrector mismatch.
		Combined with the damped-momentum recursion, this estimate yields a
		contraction and establishes last-iterate linear convergence.
		
		\item \emph{Numerical evidence of computational efficiency.}
		We compare PCE-DM with single-loop and nested baselines on regularized
		linear regression and AUC maximization problems.
		PCE-DM substantially reduces the number of full-gradient queries on
		the tested curvature-imbalanced regression instances and on the a9a
		and cod-rna datasets.
		It also achieves the shortest measured computation time on these
		regression instances and both AUC datasets.
	\end{enumerate}	
\paragraph{Organization}
Section~\ref{sec:algorithm} introduces PCE-DM and its update structure.
Section~\ref{sec:analysis} establishes last-iterate linear convergence
and derives the resulting complexity bound.
Section~\ref{sec:experiments} presents the numerical results, and
Section~\ref{sec:conclusion} concludes the paper.
\section{PCE-DM Algorithm}\label{sec:algorithm}
We now introduce the Single-Loop Predictor--Corrector Extragradient
Method with Damped Momentum (PCE-DM). As discussed in
Section~\ref{sec:introduction}, standard single-loop methods, including
gradient descent--ascent, extragradient, and optimistic gradient
descent--ascent, typically employ symmetric update structures for the
primal and dual variables
\cite{zamGDA2024,korpelevich1976,popov1980}. Although these methods have
explicit and computationally inexpensive iterations, their symmetric
structures do not directly exploit an imbalance between the primal and
dual curvature parameters. Our goal is to retain the simplicity of a
fixed-parameter, explicit single-loop method while achieving the
near-optimal condition-number dependence discussed in the introduction.

The main idea of PCE-DM is to combine the extragradient
prediction--correction framework with an auxiliary feedback recursion
adapted to the weaker-curvature variable. Under the ordering
$\mu_x\ge\mu_y$, the dual variable $y$ has weaker curvature, and we
therefore introduce the auxiliary momentum variable $v_t$ into its
update. The predictor step uses the current feedback $v_t$ to estimate
the dual displacement, while the corrector step evaluates the gradient
at the predicted point and incorporates the provisional feedback
$\widetilde v_t$. The auxiliary variable is then refreshed at the
corrected point, so the provisional and final feedback updates share the
same damped history term $\alpha v_t$. This construction yields fixed,
explicit updates and requires neither inner subproblem solves nor
accuracy schedules or staged restarts. The complete method is given in
Algorithm \ref{alg:main}.
\begin{algorithm}[h]
	\caption{Single-Loop Predictor--Corrector Extragradient
		Method with Damped Momentum (PCE-DM)}\label{alg:main}
	\begin{algorithmic}[1]
		\Require Initial point $(x_0,y_0)\in\R^n\times\R^p$,
		iteration count $T$, and parameters $h,\alpha,\rho$.
		\State Set $v_0=0$.
		\For{$t=0,1,\ldots,T-1$}
		\State $\displaystyle
		\widetilde x_t=x_t-h\nabla_x f(x_t,y_t)$
		\State $\displaystyle
		\widetilde y_t
		=y_t+h\bigl(\nabla_y f(x_t,y_t)+v_t\bigr)$
		\State $\displaystyle
		\widetilde v_t
		=\alpha v_t
		+\rho\nabla_y f(\widetilde x_t,\widetilde y_t)$
		\State $\displaystyle
		x_{t+1}
		=x_t-h\nabla_x f(\widetilde x_t,\widetilde y_t)$
		\State $\displaystyle
		y_{t+1}
		=y_t+h\bigl(
		\nabla_y f(\widetilde x_t,\widetilde y_t)
		+\widetilde v_t
		\bigr)$
		\State $\displaystyle
		v_{t+1}
		=\alpha v_t
		+\rho\nabla_y f(x_{t+1},y_{t+1})$
		\EndFor
		\Ensure $(x_T,y_T)$.
	\end{algorithmic}
\end{algorithm}
The role of the auxiliary feedback recursion can be seen more explicitly
by simplifying the dual correction update:
\begin{align*}
	y_{t+1}
	&=y_t+h\Bigl(
	\nabla_y f(\widetilde x_t,\widetilde y_t)+\widetilde v_t
	\Bigr)\\
	&=y_t+h\Bigl(
	(1+\rho)
	\nabla_y f(\widetilde x_t,\widetilde y_t)
	+\alpha v_t
	\Bigr)\\
	&=y_t+\alpha(\widetilde y_t-y_t)
	+h\Bigl(
	(1+\rho)
	\nabla_y f(\widetilde x_t,\widetilde y_t)
	-\alpha\nabla_y f(x_t,y_t)
	\Bigr).
\end{align*}
The second equality substitutes the definition of $\widetilde v_t$,
whereas the last equality uses the predictor relation
$\widetilde y_t-y_t=h\bigl(\nabla_y f(x_t,y_t)+v_t\bigr)$.
Consequently, $\widetilde y_t$ predicts the dual displacement using the
current gradient and accumulated feedback, while $y_{t+1}$ corrects this
prediction using the gradient evaluated at
$(\widetilde x_t,\widetilde y_t)$. The shared history term in
$\widetilde v_t$ and $v_{t+1}$ is also central to the residual-control
argument used in the Lyapunov analysis of the next section.

\section{Lyapunov analysis and complexity result}\label{sec:analysis}
We first state the assumptions under which the convergence analysis is
carried out.
\begin{assumption}\label[assumption]{ass:main}
	The function $f:\R^n\times\R^p\to\R$ is continuously differentiable
	and satisfies the following conditions.
	\begin{enumerate}[label=(\roman*),leftmargin=2em]
		\item The full gradient $\nabla f$ is $L$-Lipschitz continuous for
		some $L>0$; that is, for all
		$(x,y),(x',y')\in\R^n\times\R^p$,
		\begin{equation}\label{eq:smooth}
			\norm{\nabla f(x',y')-\nabla f(x,y)}
			\le L\norm{(x'-x,y'-y)}.
		\end{equation}
		
		\item There exist constants $\mu_x,\mu_y>0$ such that, for all
		$x,x'\in\R^n$ and $y,y'\in\R^p$,
		\begin{align}
			f(x',y)
			&\ge f(x,y)+\ip{\nabla_x f(x,y)}{x'-x}
			+\frac{\mu_x}{2}\norm{x'-x}^2,
			\label{eq:sc}\\
			f(x,y')
			&\le f(x,y)+\ip{\nabla_y f(x,y)}{y'-y}
			-\frac{\mu_y}{2}\norm{y'-y}^2.
			\label{eq:scave}
		\end{align}
	\end{enumerate}
\end{assumption}
Throughout the analysis, we assume without loss of generality that
\[
L\ge\mu_x\ge\mu_y>0.
\]
The ordering $\mu_x\ge\mu_y$ identifies $y$ as the variable with weaker
curvature and motivates the asymmetric feedback mechanism in PCE-DM.
If $\mu_y>\mu_x$, the roles of the primal and dual variables may be
interchanged by applying the method to
$\widehat f(\xi,\eta)=-f(\eta,\xi)$, $\xi\in\R^p$, $\eta\in\R^n.$
Under Assumption \ref{ass:main}, problem~\eqref{eq:intro-problem} admits a unique
saddle point $(x^\star,y^\star)$ satisfying $\nabla f(x^\star,y^\star)=0.$
We denote the corresponding saddle value by
$f^\star=f(x^\star,y^\star)$; see
\cite[Section~2]{wangli2020}.
We measure convergence in terms of the squared Euclidean distance to
the unique saddle point.
\begin{definition}[$\eps$-accurate relative solution]
	\label[definition]{def:accuracy}
	For an iterate $(x_t,y_t)$, define
	\[
	D_t
	:=\norm{x_t-x^\star}^2+\norm{y_t-y^\star}^2.
	\]
	Given $0<\eps<1$, an iterate $(x_T,y_T)$ is called an
	$\eps$-accurate relative solution if $D_T\le\eps D_0.$
\end{definition}
We now establish the last-iterate linear convergence of PCE-DM under
Assumotion \ref{ass:main} and derive the resulting full-gradient complexity bound.
Throughout this section, the algorithmic parameters satisfy
\begin{equation}\label{eq:analysis-parameters}
	0<h\le \frac{2}{15L},\qquad
	\rho=\frac{h\mu_x}{8},\qquad
	\frac{4}{4+h\sqrt{\mu_x\mu_y}}
	\le \alpha
	\le \frac{8}{8+h\sqrt{\mu_x\mu_y}}.
\end{equation}
For notational convenience, define the predictor and corrector
directions by
\begin{alignat}{2}
	d_t^x&:=\nabla_x f(x_t,y_t),
	&\qquad d_t^y&:=\nabla_y f(x_t,y_t)+v_t,
	\label{eq:predictor-directions}\\*
	\widetilde d_t^x&:=\nabla_x f(\widetilde x_t,\widetilde y_t),
	&\qquad
	\widetilde d_t^y&:=\nabla_y f(\widetilde x_t,\widetilde y_t)
	+\widetilde v_t.
	\label{eq:corrector-directions}
\end{alignat}
The feedback recursion in Algorithm \ref{alg:main} can equivalently be expressed
as
\begin{equation}\label{eq:endv}
	v_{t+1}-v_t
	=\frac{\rho}{\alpha}\nabla_y f(x_{t+1},y_{t+1})
	-\frac{1-\alpha}{\alpha}v_{t+1}.
\end{equation}
Our analysis is based on the following Lyapunov function:
\begin{align}
	\mathcal E_t
	&:=-\frac{1-\alpha+2\rho}{2\alpha h}
	\bigl(f(x_t,y_t)-f^\star\bigr)
	+\frac12\norm{d_t^x}^2
	+\frac12\norm{d_t^y}^2
	\notag\\
	&\qquad
	+\frac{1-\alpha}{2\alpha h}\ip{v_t}{y_t-y^\star}
	+\frac{(1-\alpha)^2}{4\alpha^2h^2}
	\norm{y_t-y^\star}^2.
	\label{eq:lyapunov}
\end{align}
We first establish that $\mathcal E_t$ is coercive with respect to the
distance to the saddle point. We then prove a one-step descent inequality
that controls the predictor--corrector mismatch through successive
direction increments. Combining these two properties yields the desired
linear convergence and complexity result.

\begin{lemma}\label{lem:coercivity}
	Under Assumption~\ref{ass:main} and the parameter conditions
	\eqref{eq:analysis-parameters}, for every $t\ge0$,
	\begin{equation}\label{eq:coercivity-simple}
		\mathcal E_t\ge \frac{\sqrt{\mu_x\mu_y}}{32}
		\bigl(\mu_x\norm{x_t-x^\star}^2
		+\mu_y\norm{y_t-y^\star}^2\bigr)\ge0.
	\end{equation}
\end{lemma}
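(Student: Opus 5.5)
The plan is to bound $\mathcal E_t$ from below term by term, using only strong convexity--concavity and the parameter ranges \eqref{eq:analysis-parameters}. Write $\Delta x=x_t-x^\star$, $\Delta y=y_t-y^\star$, $s=\sqrt{\mu_x\mu_y}$, and
\[
c=\frac{1-\alpha+2\rho}{2\alpha h},\qquad b=\frac{1-\alpha}{2\alpha h}.
\]
These are the coefficients of the gap term and of the $v$-term in \eqref{eq:lyapunov}, and the last coefficient of \eqref{eq:lyapunov} equals $b^2$.

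\emph{Step 1: bounds on the coefficients.} From the bounds on $\alpha$ we get $hs/8\le (1-\alpha)/\alpha\le hs/4$. Hence $s/16\le b\le s/8$. Since $\rho=h\mu_x/8$, we have $\rho/(\alpha h)=\mu_x/(8\alpha)\in[\mu_x/8,\ \mu_x(1+hs/4)/8]$. Using $hs\le h\mu_x\le 2/15$, this gives $c\le s/8+0.14\mu_x\le 0.27\mu_x$ and $c\ge \mu_x/8$. In particular $c+b\le 0.4\mu_x$.

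\emph{Step 2: the function gap.} Split
\[
-(f(x_t,y_t)-f^\star)=\bigl[f(x^\star,y_t)-f(x_t,y_t)\bigr]+\bigl[f^\star-f(x^\star,y_t)\bigr].
\]
By \eqref{eq:sc} at $(x_t,y_t)$ with $x'=x^\star$, the first bracket is at least $-\ip{d_t^x}{\Delta x}+\frac{\mu_x}{2}\norm{\Delta x}^2$. By \eqref{eq:scave} at $(x^\star,y^\star)$, using $\nabla_y f(x^\star,y^\star)=0$, the second bracket is at least $\frac{\mu_y}{2}\norm{\Delta y}^2$.

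\emph{Step 3: the $v$-term.} Substitute $v_t=d_t^y-\nabla_y f(x_t,y_t)$ into $b\ip{v_t}{\Delta y}$. The $d_t^y$ part combines with the other terms as
\[
\tfrac12\norm{d_t^y}^2+b\ip{d_t^y}{\Delta y}+b^2\norm{\Delta y}^2=\tfrac12\norm{d_t^y+b\Delta y}^2+\tfrac{b^2}{2}\norm{\Delta y}^2\ge \tfrac{b^2}{2}\norm{\Delta y}^2.
\]
The remaining piece is $-b\ip{\nabla_y f(x_t,y_t)}{\Delta y}$, and it is the main obstacle. Bounding it through Lipschitz continuity would produce a coupling term $Lb\norm{\Delta x}\norm{\Delta y}$ that cannot be absorbed when $L\gg\mu_x$.

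To handle it, I will instead use strong monotonicity of the saddle operator. Adding \eqref{eq:sc} and \eqref{eq:scave} at $(x_t,y_t)$ and $(x^\star,y^\star)$ in both directions gives
\[
\ip{\nabla_x f(x_t,y_t)}{\Delta x}-\ip{\nabla_y f(x_t,y_t)}{\Delta y}\ge \mu_x\norm{\Delta x}^2+\mu_y\norm{\Delta y}^2 .
\]
Therefore
\[
-b\ip{\nabla_y f(x_t,y_t)}{\Delta y}\ge b\mu_x\norm{\Delta x}^2+b\mu_y\norm{\Delta y}^2-b\ip{d_t^x}{\Delta x}.
\]
This trades the $y$-gradient for another cross term in $d_t^x$, with no dependence on $L$.

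\emph{Step 4: absorbing the cross terms.} Collecting Steps 2 and 3, the only remaining indefinite term is $-(c+b)\ip{d_t^x}{\Delta x}$. Young's inequality with $\tfrac12\norm{d_t^x}^2$ gives
\[
\tfrac12\norm{d_t^x}^2-(c+b)\ip{d_t^x}{\Delta x}\ge -\tfrac{(c+b)^2}{2}\norm{\Delta x}^2 .
\]
The result is
\[
\mathcal E_t\ge\Bigl(\tfrac{c\mu_x}{2}+b\mu_x-\tfrac{(c+b)^2}{2}\Bigr)\norm{\Delta x}^2+\Bigl(\tfrac{c\mu_y}{2}+b\mu_y+\tfrac{b^2}{2}\Bigr)\norm{\Delta y}^2 .
\]
Since $c+b\le 0.4\mu_x$, we have $(c+b)^2/2\le 0.2\mu_x(c+b)$. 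So the $\Delta x$-coefficient is at least $b\mu_x/2\ge s\mu_x/32$. The $\Delta y$-coefficient is at least $b\mu_y\ge s\mu_y/16$. This yields \eqref{eq:coercivity-simple}; nonnegativity of the right-hand side is immediate.

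The step most likely to need care is verifying the numerical constants in Step 1 against \eqref{eq:analysis-parameters}, in particular the upper bound $c+b\le 0.4\mu_x$.
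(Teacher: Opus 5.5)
Your proof is correct and follows essentially the same route as the paper. It uses the same square completion in $d_t^y+\gamma(y_t-y^\star)$ after substituting $v_t=d_t^y-\nabla_y f(x_t,y_t)$, and the same four strong convexity/concavity inequalities; your strong-monotonicity step is exactly what the paper packages into \eqref{eq:geom3}--\eqref{eq:dual-inner-upper}. The only cosmetic difference is that the paper applies Young's inequality separately in \eqref{eq:geom4} and \eqref{eq:dual-inner-upper} and checks that the coefficient $\frac12-\frac{c+b}{2\mu_x}$ of $\norm{d_t^x}^2$ is nonnegative, whereas you absorb the combined cross term $-(c+b)\ip{d_t^x}{x_t-x^\star}$ in a single Young step; either way only $c+b\le\mu_x$ is needed, so your bound $c+b\le 0.4\mu_x$ is more than enough.
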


\begin{proof}
	The parameter conditions \eqref{eq:analysis-parameters} imply
	$0<\alpha<1$ and $\rho>0$, so $1-\alpha+2\rho\ge0$.
	Strong concavity gives
	\begin{equation*}
		f(x_t,y^\star)\le f(x_t,y_t)
		-\ip{\nabla_y f(x_t,y_t)}{y_t-y^\star}
		-\frac{\mu_y}{2}\norm{y_t-y^\star}^2,
	\end{equation*}
	while strong convexity and $\nabla_x f(x^\star,y^\star)=0$ give
	\begin{equation*}
		f(x_t,y^\star)\ge f^\star
		+\frac{\mu_x}{2}\norm{x_t-x^\star}^2.
	\end{equation*}
	Combining these inequalities yields
	\begin{equation}\label{eq:geom3}
		f(x_t,y_t)-f^\star
		-\ip{\nabla_y f(x_t,y_t)}{y_t-y^\star}
		\ge \frac{\mu_x}{2}\norm{x_t-x^\star}^2
		+\frac{\mu_y}{2}\norm{y_t-y^\star}^2.
	\end{equation}
	Next, strong convexity and Young's inequality give
	\begin{equation*}
		f(x_t,y_t)-f(x^\star,y_t)
		\le \ip{d_t^x}{x_t-x^\star}
		-\frac{\mu_x}{2}\norm{x_t-x^\star}^2
		\le \frac{1}{2\mu_x}\norm{d_t^x}^2.
	\end{equation*}
	Strong concavity and $\nabla_y f(x^\star,y^\star)=0$ also give
	\begin{equation*}
		f(x^\star,y_t)-f^\star
		\le -\frac{\mu_y}{2}\norm{y_t-y^\star}^2.
	\end{equation*}
	Adding these inequalities yields
	\begin{equation}\label{eq:geom4}
		f(x_t,y_t)-f^\star
		\le \frac{1}{2\mu_x}\norm{d_t^x}^2
		-\frac{\mu_y}{2}\norm{y_t-y^\star}^2.
	\end{equation}
	Substituting $v_t=d_t^y-\nabla_y f(x_t,y_t)$ and completing the square,
	we obtain
	\begin{equation*}
		\begin{aligned}
			&\frac12\norm{d_t^y}^2
			+\frac{1-\alpha}{2\alpha h}\ip{v_t}{y_t-y^\star}
			+\frac{(1-\alpha)^2}{4\alpha^2h^2}\norm{y_t-y^\star}^2\\
			={}&\frac12\norm{d_t^y
				+\frac{1-\alpha}{2\alpha h}(y_t-y^\star)}^2
			+\frac{(1-\alpha)^2}{8\alpha^2h^2}\norm{y_t-y^\star}^2
			-\frac{1-\alpha}{2\alpha h}
			\ip{\nabla_y f(x_t,y_t)}{y_t-y^\star}.
		\end{aligned}
	\end{equation*}
	Moreover, \eqref{eq:geom3} and \eqref{eq:geom4} imply
	\begin{equation}\label{eq:dual-inner-upper}
		\ip{\nabla_y f(x_t,y_t)}{y_t-y^\star}
		\le \frac{1}{2\mu_x}\norm{d_t^x}^2
		-\frac{\mu_x}{2}\norm{x_t-x^\star}^2
		-\mu_y\norm{y_t-y^\star}^2.
	\end{equation}
	Using the square-completion identity in \eqref{eq:lyapunov}, applying
	\eqref{eq:geom4} and \eqref{eq:dual-inner-upper}, and dropping
	nonnegative terms yields
	\begin{align*}
		\mathcal E_t\ge{}&\left[\frac12
		-\frac{1-\alpha+2\rho}{4\alpha h\mu_x}
		-\frac{1-\alpha}{4\alpha h\mu_x}\right]\norm{d_t^x}^2\\
		&+\frac{1-\alpha}{4\alpha h}
		\left(\mu_x\norm{x_t-x^\star}^2
		+\mu_y\norm{y_t-y^\star}^2\right).
	\end{align*}
	Under \eqref{eq:analysis-parameters}, the coefficient of
	$\norm{d_t^x}^2$ is nonnegative. Thus,
	\begin{equation*}
		\mathcal E_t\ge \frac{1-\alpha}{4\alpha h}
		\left(\mu_x\norm{x_t-x^\star}^2
		+\mu_y\norm{y_t-y^\star}^2\right).
	\end{equation*}
	Finally, the upper bound on $\alpha$ in
	\eqref{eq:analysis-parameters} gives \eqref{eq:coercivity-simple}.
\end{proof}

To establish one-step Lyapunov descent, we introduce the following
notation and two auxiliary lemmas. Define
\begin{equation*}
	\gamma:=\frac{1-\alpha}{2\alpha h},
	\qquad
	\beta:=\frac{1-\alpha+2\rho}{2\alpha h}.
\end{equation*}
The parameter conditions \eqref{eq:analysis-parameters} imply
\begin{equation}
	\frac{\sqrt{\mu_x\mu_y}}{16}
	\le\gamma\le\frac{\sqrt{\mu_x\mu_y}}8,
	\qquad
	\frac{\mu_x}{8\alpha}+\frac{\sqrt{\mu_x\mu_y}}{16}
	\le\beta\le\frac{\mu_x}{8\alpha}+\frac{\sqrt{\mu_x\mu_y}}8.
	\label{eq:descent-parameters}
\end{equation}
\begin{lemma}\label{lem:correction-geometry}
	Under Assumption \ref{ass:main}, the correction updates in Algorithm \ref{alg:main}
	satisfy, for every $t\ge0$,
	\begin{equation}\label{eq:correction-geometry}
		\ip{\widetilde d_t^x}{d_{t+1}^x-d_t^x}
		+\ip{\widetilde d_t^y}
		{\nabla_y f(x_{t+1},y_{t+1})-\nabla_y f(x_t,y_t)}
		\le-h\mu_x\norm{\widetilde d_t^x}^2
		-h\mu_y\norm{\widetilde d_t^y}^2.
	\end{equation}
\end{lemma}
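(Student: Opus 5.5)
The claim is a strong monotonicity statement for the saddle operator $F(x,y):=(\nabla_x f(x,y),-\nabla_y f(x,y))$, evaluated along the correction step. The plan is to establish this monotonicity and then specialize it to the displacement $(x_{t+1}-x_t,\,y_{t+1}-y_t)$, which Algorithm~\ref{alg:main} expresses through the corrector directions. By \eqref{eq:corrector-directions}, the correction updates read
\[
x_{t+1}-x_t=-h\widetilde d_t^x,\qquad y_{t+1}-y_t=h\widetilde d_t^y.
\]
Also, $d_{t+1}^x-d_t^x=\nabla_x f(x_{t+1},y_{t+1})-\nabla_x f(x_t,y_t)$ by \eqref{eq:predictor-directions}. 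The $v$-terms play no role, because the $y$-part of the claimed inequality involves only $\nabla_y f$ differences.

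The first step is to prove that for all $z=(x,y)$ and $z'=(x',y')$,
\[
\ip{\nabla_x f(z')-\nabla_x f(z)}{x'-x}-\ip{\nabla_y f(z')-\nabla_y f(z)}{y'-y}\ge \mu_x\norm{x'-x}^2+\mu_y\norm{y'-y}^2 .
\]
I would obtain this by applying \eqref{eq:sc} four times: at base point $(x,y)$ toward $x'$, at $(x',y')$ toward $x$, at $(x',y)$ toward $x$, and at $(x,y')$ toward $x'$. I would apply \eqref{eq:scave} in the same four-fold way in the $y$ variable. The function values at the four corners $(x,y),(x',y),(x,y'),(x',y')$ cancel in the sum.

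An alternative that avoids the cross bookkeeping is to set $\phi(x,y)=f(x,y)-\frac{\mu_x}{2}\norm{x}^2+\frac{\mu_y}{2}\norm{y}^2$. This $\phi$ is convex--concave, so the standard monotonicity of the saddle operator of a differentiable convex--concave function applies. That monotonicity again follows from the four inequalities $\phi(x',y)\ge\phi(x,y)+\ip{\nabla_x\phi(x,y)}{x'-x}$ and their analogues at the cross points. Re-expanding $\phi$ in terms of $f$ recovers the $\mu_x,\mu_y$ terms. I expect this bookkeeping, that is, making sure the cross-point gradients cancel and do not leave residual terms, to be the only real obstacle. It is routine but sign-sensitive.

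The final step is to substitute $z=(x_t,y_t)$ and $z'=(x_{t+1},y_{t+1})$. This gives
\[
-h\ip{\widetilde d_t^x}{d_{t+1}^x-d_t^x}-h\ip{\widetilde d_t^y}{\nabla_y f(x_{t+1},y_{t+1})-\nabla_y f(x_t,y_t)}\ge h^2\mu_x\norm{\widetilde d_t^x}^2+h^2\mu_y\norm{\widetilde d_t^y}^2 .
\]
Dividing by $-h<0$, which reverses the inequality, yields exactly \eqref{eq:correction-geometry}. No use of the parameter conditions \eqref{eq:analysis-parameters} or of Lipschitz continuity is needed, which is consistent with the lemma being stated under Assumption~\ref{ass:main} alone.
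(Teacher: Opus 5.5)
Your overall route matches the paper's: you establish the strong monotonicity inequality for $(\nabla_x f,-\nabla_y f)$ and substitute $x_{t+1}-x_t=-h\widetilde d_t^x$, $y_{t+1}-y_t=h\widetilde d_t^y$. Dividing by $-h$ then gives \eqref{eq:correction-geometry}. The paper simply asserts the monotonicity inequality. Your $\phi$-based justification is a correct way to supply it: $f-\frac{\mu_x}{2}\norm{x}^2+\frac{\mu_y}{2}\norm{y}^2$ is convex--concave, and its saddle operator is monotone.

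Your first derivation of the monotonicity inequality, however, would fail as written. Taking base points at the cross corners $(x',y)$ and $(x,y')$ introduces four extra gradients: $\nabla_x f(x',y)$, $\nabla_x f(x,y')$, $\nabla_y f(x,y')$ and $\nabla_y f(x',y)$. Each of them appears in exactly one of your eight inequalities, so no nonnegative combination that uses those inequalities can cancel them. Write $M(z,z')$ for the left side of the monotonicity inequality, and set $w=(x,y')$, $w'=(x',y)$. Summing all eight inequalities only gives
\[
M(z,z')+M(w,w')\ge 2\bigl(\mu_x\norm{x'-x}^2+\mu_y\norm{y'-y}^2\bigr),
\]
which does not isolate $M(z,z')$. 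The doubled $\mu$-terms are a symptom of this.

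The correct bookkeeping uses only four inequalities, all based at $z=(x,y)$ and $z'=(x',y')$:
\begin{itemize}
\item \eqref{eq:sc} at $z$ toward $x'$ and at $z'$ toward $x$;
\item \eqref{eq:scave} at $z$ toward $y'$ and at $z'$ toward $y$.
\end{itemize}
The first pair bounds $f(x',y)+f(x,y')$ from below by $f(z)+f(z')+\ip{\nabla_x f(z)-\nabla_x f(z')}{x'-x}+\mu_x\norm{x'-x}^2$. The second pair bounds the same quantity from above by $f(z)+f(z')+\ip{\nabla_y f(z)-\nabla_y f(z')}{y'-y}-\mu_y\norm{y'-y}^2$. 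Chaining the two bounds eliminates all function values and yields exactly the monotonicity inequality. This four-inequality argument is also what your $\phi$ variant implicitly uses, so that route is sound as stated.
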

\begin{proof}
	Strong convexity--strong concavity implies the following strong
	monotonicity inequality for all
	$(x,y),(x',y')\in\R^n\times\R^p$:
	\begin{align*}
		&\ip{\nabla_x f(x',y')-\nabla_x f(x,y)}{x'-x}
		-\ip{\nabla_y f(x',y')-\nabla_y f(x,y)}{y'-y}\\
		\ge{}&\mu_x\norm{x'-x}^2+\mu_y\norm{y'-y}^2.
	\end{align*}
	Applying this inequality with $(x',y')=(x_{t+1},y_{t+1})$ and
	$(x,y)=(x_t,y_t)$, and using the correction relations
	\begin{equation*}
		x_{t+1}-x_t=-h\widetilde d_t^x,
		\qquad
		y_{t+1}-y_t=h\widetilde d_t^y,
	\end{equation*}
	yields \eqref{eq:correction-geometry}, since $h>0$.
\end{proof}
\begin{lemma}[Predictor--corrector mismatch]\label{lem:defect}
	Under Assumption \ref{ass:main} and the parameter conditions
	\eqref{eq:analysis-parameters}, for every $t\ge0$,
	\begin{equation}\label{eq:defect-bound}
		\norm{\widetilde d_t^x-d_{t+1}^x}^2
		+\norm{\widetilde d_t^y-d_{t+1}^y}^2
		\le\frac1{36}\left(
		\norm{d_{t+1}^x-d_t^x}^2
		+\norm{d_{t+1}^y-d_t^y}^2\right).
	\end{equation}
\end{lemma}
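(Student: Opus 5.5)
The plan is to write the mismatch as a gradient difference between the predicted point $(\widetilde x_t,\widetilde y_t)$ and the corrected point $(x_{t+1},y_{t+1})$. That difference is bounded by $L$ times the distance between the two points, and the distance is re-expressed through direction increments. Throughout I will work with stacked vectors in $\R^n\times\R^p$. Set
\[
M_t:=\bigl(\widetilde d_t^x-d_{t+1}^x,\ \widetilde d_t^y-d_{t+1}^y\bigr),\qquad
\Delta_t:=\bigl(d_{t+1}^x-d_t^x,\ d_{t+1}^y-d_t^y\bigr).
\]
The claim \eqref{eq:defect-bound} is then $\norm{M_t}\le\frac16\norm{\Delta_t}$.

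\emph{Step 1: the mismatch is a pure gradient difference.} The $x$-component is $\nabla_x f(\widetilde x_t,\widetilde y_t)-\nabla_x f(x_{t+1},y_{t+1})$. For the $y$-component I use the shared history term: $\widetilde v_t-v_{t+1}=\rho\bigl(\nabla_y f(\widetilde x_t,\widetilde y_t)-\nabla_y f(x_{t+1},y_{t+1})\bigr)$, because $\alpha v_t$ cancels. Hence
\[
\widetilde d_t^y-d_{t+1}^y=(1+\rho)\bigl(\nabla_y f(\widetilde x_t,\widetilde y_t)-\nabla_y f(x_{t+1},y_{t+1})\bigr).
\]
Since $1+\rho\ge1$, the Lipschitz bound \eqref{eq:smooth} gives
\[
\norm{M_t}\le(1+\rho)L\,\norm{(\widetilde x_t-x_{t+1},\,\widetilde y_t-y_{t+1})}.
\]

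\emph{Step 2: express the point gap through directions.} From the predictor and corrector updates,
\[
\widetilde x_t-x_{t+1}=h(\widetilde d_t^x-d_t^x),\qquad \widetilde y_t-y_{t+1}=h(d_t^y-\widetilde d_t^y).
\]
The sign in the $y$-block is irrelevant for the norm, so the point gap has norm $h\norm{(\widetilde d_t^x-d_t^x,\ \widetilde d_t^y-d_t^y)}$. Splitting $\widetilde d_t-d_t=(\widetilde d_t-d_{t+1})+(d_{t+1}-d_t)$ blockwise and applying the triangle inequality in the product space gives
\[
\norm{M_t}\le c\bigl(\norm{M_t}+\norm{\Delta_t}\bigr),\qquad c:=(1+\rho)hL.
\]

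\emph{Step 3: absorb and check constants.} This is the only delicate point, and it is purely numerical. From \eqref{eq:analysis-parameters}, $hL\le 2/15$, and $\rho=h\mu_x/8\le hL/8\le 1/60$. Therefore $c\le\frac{61}{60}\cdot\frac{2}{15}=\frac{61}{450}<\frac17$. Then $\norm{M_t}\le\frac{c}{1-c}\norm{\Delta_t}\le\frac16\norm{\Delta_t}$, and squaring yields \eqref{eq:defect-bound}.

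The main thing to get right is not using a cruder inequality such as $\norm{a+b}^2\le2\norm a^2+2\norm b^2$ in Step 2. The factor $2$ would spoil the constant $1/36$, so the linear (unsquared) absorption argument is essential.
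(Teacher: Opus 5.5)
Your proposal is correct and matches the paper's proof step for step. It uses the same cancellation of the shared history term $\alpha v_t$ to produce the factor $(1+\rho)$, the same Lipschitz bound, the same rewriting of the point gap as $h$ times the direction gap split through $d_{t+1}$, and the same linear absorption with $hL(1+\rho)\le 61/450<1/7$, which gives the ratio $1/6$ and hence the constant $1/36$ after squaring.
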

\begin{proof}
	The definitions of the dual directions and the cancellation of the
	shared history term $\alpha v_t$ in the feedback updates give
	\begin{equation*}
		\widetilde d_t^y-d_{t+1}^y
		=(1+\rho)\bigl(
		\nabla_y f(\widetilde x_t,\widetilde y_t)
		-\nabla_y f(x_{t+1},y_{t+1})\bigr).
	\end{equation*}
	Since $1+\rho\ge1$, Lipschitz continuity of $\nabla f$ implies
	\begin{align*}
		\sqrt{\norm{\widetilde d_t^x-d_{t+1}^x}^2
			+\norm{\widetilde d_t^y-d_{t+1}^y}^2}
		&\le(1+\rho)
		\norm{\nabla f(\widetilde x_t,\widetilde y_t)
			-\nabla f(x_{t+1},y_{t+1})}\\
		&\le L(1+\rho)
		\sqrt{\norm{\widetilde x_t-x_{t+1}}^2
			+\norm{\widetilde y_t-y_{t+1}}^2}.
	\end{align*}
	The predictor and corrector updates imply
	\begin{equation*}
		\widetilde x_t-x_{t+1}
		=h\bigl(d_{t+1}^x-d_t^x+\widetilde d_t^x-d_{t+1}^x\bigr),
		\qquad
		\widetilde y_t-y_{t+1}
		=-h\bigl(d_{t+1}^y-d_t^y+\widetilde d_t^y-d_{t+1}^y\bigr).
	\end{equation*}
	Applying the triangle inequality therefore yields
	\begin{align*}
		&\sqrt{\norm{\widetilde x_t-x_{t+1}}^2
			+\norm{\widetilde y_t-y_{t+1}}^2}\\
		\le{}&h\sqrt{\norm{d_{t+1}^x-d_t^x}^2
			+\norm{d_{t+1}^y-d_t^y}^2}
		+h\sqrt{\norm{\widetilde d_t^x-d_{t+1}^x}^2
			+\norm{\widetilde d_t^y-d_{t+1}^y}^2}.
	\end{align*}
	By \eqref{eq:analysis-parameters} and $\mu_x\le L$,
	$\rho=h\mu_x/8\le1/60$ and $hL(1+\rho)\le61/450<1/7$.
	Combining the preceding bounds and rearranging gives
	\begin{align*}
		\sqrt{\norm{\widetilde d_t^x-d_{t+1}^x}^2
			+\norm{\widetilde d_t^y-d_{t+1}^y}^2}
		\le{}&\frac{hL(1+\rho)}{1-hL(1+\rho)}
		\sqrt{\norm{d_{t+1}^x-d_t^x}^2
			+\norm{d_{t+1}^y-d_t^y}^2}\\
		\le{}&\frac16
		\sqrt{\norm{d_{t+1}^x-d_t^x}^2
			+\norm{d_{t+1}^y-d_t^y}^2}.
	\end{align*}
	Squaring both sides proves \eqref{eq:defect-bound}.
\end{proof}
\begin{lemma}[One-step Lyapunov descent]\label{lem:descent}
	Under Assumption \ref{ass:main} and the parameter conditions
	\eqref{eq:analysis-parameters}, for every $t\ge0$,
	\begin{equation}\label{eq:main-descent}
		\frac{1+\alpha}{2\alpha}\mathcal E_{t+1}-\mathcal E_t
		\le -\frac18\left(
		\norm{d_{t+1}^x-d_t^x}^2+\norm{d_{t+1}^y-d_t^y}^2\right).
	\end{equation}
\end{lemma}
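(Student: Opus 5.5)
We will prove \eqref{eq:main-descent} by expanding $\mathcal E_{t+1}-\mathcal E_t$ term by term, with each difference linearized around the corrector directions $\widetilde d_t^x,\widetilde d_t^y$. Write $\Delta^x:=d_{t+1}^x-d_t^x$, $\Delta^y:=d_{t+1}^y-d_t^y$, $e^x:=\widetilde d_t^x-d_{t+1}^x$ and $e^y:=\widetilde d_t^y-d_{t+1}^y$.

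\textbf{Quadratic direction terms.} For these we use the exact identity
\[
\tfrac12\norm{d_{t+1}}^2-\tfrac12\norm{d_t}^2=\ip{d_{t+1}}{\Delta}-\tfrac12\norm{\Delta}^2 .
\]
We then replace $d_{t+1}$ by $\widetilde d_t-e$. This gives $\ip{\widetilde d_t}{\Delta}-\ip{e}{\Delta}-\tfrac12\norm{\Delta}^2$. In the $y$-block, $\Delta^y$ is split as
\[
\Delta^y=\bigl(\nabla_y f(x_{t+1},y_{t+1})-\nabla_y f(x_t,y_t)\bigr)+(v_{t+1}-v_t).
\]
The first part pairs with $\widetilde d_t^x$ and is handled by Lemma~\ref{lem:correction-geometry}, which produces $-h\mu_x\norm{\widetilde d_t^x}^2-h\mu_y\norm{\widetilde d_t^y}^2$. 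The second part is rewritten with \eqref{eq:endv} as $\frac{\rho}{\alpha}\nabla_y f(x_{t+1},y_{t+1})-\frac{1-\alpha}{\alpha}v_{t+1}$. The mismatch term is controlled by Young's inequality and Lemma~\ref{lem:defect}:
\[
\abs{\ip{e}{\Delta}}\le\tfrac16\norm{\Delta}^2 .
\]
So after this step at least $-(\tfrac12-\tfrac16)\norm{\Delta}^2=-\tfrac13\norm{\Delta}^2$ remains. Some of this budget will be spent below, and we aim to keep $-\tfrac18\norm{\Delta}^2$.

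\textbf{Function-value and $y$-distance terms.} For $-\beta(f(x_{t+1},y_{t+1})-f(x_t,y_t))$ we use $L$-smoothness along the step $(x_{t+1}-x_t,\,y_{t+1}-y_t)=(-h\widetilde d_t^x,\,h\widetilde d_t^y)$, evaluating gradients at $(x_{t+1},y_{t+1})$ so that the leading term is
\[
-\beta h\bigl(\ip{d_{t+1}^x}{-\widetilde d_t^x}+\ip{\nabla_y f_{t+1}}{\widetilde d_t^y}\bigr).
\]
The $\gamma$-weighted cross term $\gamma\ip{v}{y-y^\star}$ and the distance term $\gamma^2\norm{y-y^\star}^2$ are expanded exactly using $y_{t+1}-y_t=h\widetilde d_t^y$ and \eqref{eq:endv}. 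The cross term then produces $\gamma\ip{v_{t+1}-v_t}{y_{t+1}-y^\star}+\gamma h\ip{v_t}{\widetilde d_t^y}$.

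\textbf{Role of the factor $\frac{1+\alpha}{2\alpha}$.} This factor equals $1+\frac{1-\alpha}{2\alpha}=1+h\gamma$. The extra $h\gamma\,\mathcal E_{t+1}$ must be absorbed by the negative terms generated so far:
\begin{itemize}
\item the $-\frac{1-\alpha}{\alpha}\ip{d_{t+1}^y}{v_{t+1}}$ damping term from \eqref{eq:endv};
\item the strong-monotonicity terms $-h\mu_x\norm{\widetilde d^x}^2$ and $-h\mu_y\norm{\widetilde d^y}^2$;
\item the $\rho/\alpha$ feedback term paired with $\nabla_y f$.
\end{itemize}
To bound $h\gamma\,\mathcal E_{t+1}$ from above, we reuse the estimates \eqref{eq:geom3}, \eqref{eq:geom4} and \eqref{eq:dual-inner-upper} from the proof of Lemma~\ref{lem:coercivity}, now at time $t+1$. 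These convert function gaps and $\ip{\nabla_y f}{y-y^\star}$ into $\norm{d^x}^2$ and distance terms. The specific choice $\beta=\gamma+\rho/(\alpha h)$ is what should make the $\ip{\nabla_y f_{t+1}}{y_{t+1}-y^\star}$ contributions from the cross term and from the function-gap term cancel.

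\textbf{Final collection.} All remaining inner products between $d_{t+1}$ and $\widetilde d_t$ are converted using $\widetilde d_t=d_{t+1}+e$ and Lemma~\ref{lem:defect}. This leaves a quadratic form in $(d_{t+1}^x,\,d_{t+1}^y,\,y_{t+1}-y^\star,\,\Delta)$. We then verify its negativity using \eqref{eq:descent-parameters}, $h\le 2/(15L)$, $\rho=h\mu_x/8$, and the bounds $\gamma\le\sqrt{\mu_x\mu_y}/8$ and $\gamma^2\le\mu_x\mu_y/64$.

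\textbf{Main obstacle.} The hardest step is this bookkeeping: the $O(h\gamma)$ terms coming from $\mathcal E_{t+1}$ must be dominated by $h\mu_x\norm{\widetilde d^x}^2$, $h\mu_y\norm{\widetilde d^y}^2$ and the damping of $v$, and the cross terms $\ip{v_{t+1}}{y_{t+1}-y^\star}$ must combine into a complete square as in the coercivity proof. I would first try to group the $y$-quantities as $\norm{d_{t+1}^y+\gamma(y_{t+1}-y^\star)}^2$, mirroring Lemma~\ref{lem:coercivity}. Any residual smoothness error of size $O(hL\beta h)\norm{\cdot}^2$ should be absorbed into the remaining $\norm{\Delta}^2$ budget via Lemma~\ref{lem:defect}.
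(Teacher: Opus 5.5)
Your skeleton matches the paper's: the expansion of $(1+h\gamma)\mathcal E_{t+1}-\mathcal E_t$, the squared-norm identity, Lemma~\ref{lem:correction-geometry}, \eqref{eq:endv}, \eqref{eq:geom3} at $t+1$, a completed square in the $y$-block, and a final appeal to Lemma~\ref{lem:defect} with Cauchy--Schwarz. The gap is the step where you bound the function-value increment by joint $L$-smoothness. Expanding at $(x_{t+1},y_{t+1})$ gives
\[
-\beta\bigl[f(x_{t+1},y_{t+1})-f(x_t,y_t)-h\ip{\nabla_y f(x_{t+1},y_{t+1})}{\widetilde d_t^y}\bigr]
\le h\beta\ip{d_{t+1}^x}{\widetilde d_t^x}
+\frac{Lh^2\beta}{2}\bigl(\norm{\widetilde d_t^x}^2+\norm{\widetilde d_t^y}^2\bigr).
\]
Since $\beta\ge\mu_x/8$, the $y$-part of the error is at least $\frac{hL}{16}\,h\mu_x\norm{\widetilde d_t^y}^2$.

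In the weak-curvature block, the only negative direction terms available are $-h\mu_y\norm{\widetilde d_t^y}^2$ from Lemma~\ref{lem:correction-geometry} and $-\Theta(h\gamma)\norm{d_{t+1}^y}^2$, where $\gamma\le\sqrt{\mu_x\mu_y}/8$. The $\beta$-scale $y$-terms cancel exactly: the $h\beta\nabla_y f(x_{t+1},y_{t+1})$ part of $v_{t+1}-v_t+h\gamma v_{t+1}$ cancels the linear $y$-term above. So nothing of order $h\mu_x$ survives there. At $h=2/(15L)$ the error therefore dominates once $\mu_x/\mu_y$ exceeds a moderate constant. Closing it would force $h\lesssim L^{-1}\sqrt{\mu_y/\mu_x}$, which is not what \eqref{eq:analysis-parameters} allows and would degrade the iteration count to order $\kappa_y$.

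Your proposed remedy, absorbing this error into the $\norm{\Delta}^2$ budget via Lemma~\ref{lem:defect}, is not available. That lemma bounds $\norm{\widetilde d_t-d_{t+1}}$ by $\norm{\Delta}$, not $\norm{\widetilde d_t}$ itself. Only residuals of the form $\norm{e}^2$ can be pushed into the increment budget.

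The missing idea is to split the increment through the intermediate point $(x_{t+1},y_t)$. Use strong convexity of $f(\cdot,y_t)$ with the gradient at $x_t$, and strong concavity of $f(x_{t+1},\cdot)$ with the gradient at $y_{t+1}$. This gives
\[
-\beta\bigl[f(x_{t+1},y_{t+1})-f(x_t,y_t)-h\ip{\nabla_y f(x_{t+1},y_{t+1})}{\widetilde d_t^y}\bigr]
\le h\beta\ip{d_t^x}{\widetilde d_t^x}
-\frac{h^2\beta}{2}\bigl(\mu_x\norm{\widetilde d_t^x}^2+\mu_y\norm{\widetilde d_t^y}^2\bigr),
\]
with no error term in either block. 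The price is that the $x$-gradient is $d_t^x$ rather than $d_{t+1}^x$. Writing $d_t^x=d_{t+1}^x-\Delta^x$ creates $-h\beta\ip{\Delta^x}{\widetilde d_t^x}$, which the paper merges with $-\frac12\norm{\Delta^x}^2$ into the shifted square $-\frac12\norm{\Delta^x+h\beta\widetilde d_t^x}^2$. Its cost, $\frac32h^2\beta^2\norm{\widetilde d_t^x}^2$, lies entirely in the $x$-block, where $h\mu_x\norm{\widetilde d_t^x}^2$ absorbs it. Your $L$-smoothness error in the $x$-block would be harmless for the same reason; only the $y$-part breaks. The only $\norm{\widetilde d_t^y}^2$ errors left in the paper come from the momentum inner product and are $h^2\gamma(L+2\gamma)$, i.e.\ $O(hL)\cdot h\gamma$, which $\frac{h\gamma}{4}\norm{d_{t+1}^y}^2$ dominates.

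Two smaller corrections:
\begin{itemize}
\item In Lemma~\ref{lem:correction-geometry}, the $\nabla_y f$-part of $\Delta^y$ pairs with $\widetilde d_t^y$, not $\widetilde d_t^x$.
\item The damping product with $v_{t+1}$ that you list as a negative term is not sign-definite on its own. It becomes useful only after substituting $v_{t+1}=d_{t+1}^y-\nabla_y f(x_{t+1},y_{t+1})$, which is how the paper extracts $-\frac{h\gamma}{2}\norm{d_{t+1}^y}^2$.
\end{itemize}
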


\begin{proof}
	With the notation $\gamma$ and $\beta$, \eqref{eq:lyapunov} reads
	\begin{equation*}
		\mathcal E_t
		=-\beta\bigl(f(x_t,y_t)-f^\star\bigr)
		+\frac12\norm{d_t^x}^2+\frac12\norm{d_t^y}^2
		+\gamma\ip{v_t}{y_t-y^\star}
		+\gamma^2\norm{y_t-y^\star}^2.
	\end{equation*}
	Expanding the weighted Lyapunov difference gives
	\begin{align*}
		(1+h\gamma)\mathcal E_{t+1}-\mathcal E_t
		=&-\beta\bigl[f(x_{t+1},y_{t+1})-f(x_t,y_t)\bigr]
		-h\gamma\beta\bigl[f(x_{t+1},y_{t+1})-f^\star\bigr]\\
		&+\frac12\sum_{i\in\{x,y\}}\bigl(\norm{d_{t+1}^i}^2-\norm{d_t^i}^2\bigr)
		+\frac{h\gamma}{2}\sum_{i\in\{x,y\}}\norm{d_{t+1}^i}^2\\
		&+\gamma\bigl[\ip{v_{t+1}}{y_{t+1}-y^\star}
		-\ip{v_t}{y_t-y^\star}\bigr]
		+h\gamma^2\ip{v_{t+1}}{y_{t+1}-y^\star}\\
		&+\gamma^2\bigl[\norm{y_{t+1}-y^\star}^2-\norm{y_t-y^\star}^2\bigr]
		+h\gamma^3\norm{y_{t+1}-y^\star}^2.
	\end{align*}
	For each $i\in\{x,y\}$, the difference of squared norms satisfies
	\begin{equation*}
		\frac12\bigl(\norm{d_{t+1}^i}^2-\norm{d_t^i}^2\bigr)
		=\ip{\widetilde d_t^i}{d_{t+1}^i-d_t^i}
		-\ip{\widetilde d_t^i-d_{t+1}^i}{d_{t+1}^i-d_t^i}
		-\frac12\norm{d_{t+1}^i-d_t^i}^2.
	\end{equation*}
	The dual correction relation $y_{t+1}-y_t=h\widetilde d_t^y$ gives
	\begin{equation*}
		\ip{v_{t+1}}{y_{t+1}-y^\star}
		-\ip{v_t}{y_t-y^\star}
		=\ip{v_{t+1}-v_t}{y_{t+1}-y^\star}
		+h\ip{v_{t+1}}{\widetilde d_t^y}
		-h\ip{v_{t+1}-v_t}{\widetilde d_t^y},
	\end{equation*}
	and
	\begin{equation*}
		\norm{y_{t+1}-y^\star}^2-\norm{y_t-y^\star}^2
		=2h\ip{\widetilde d_t^y}{y_{t+1}-y^\star}
		-h^2\norm{\widetilde d_t^y}^2.
	\end{equation*}
	Moreover, the definition of $d_t^y$ and \eqref{eq:endv} imply
	\begin{align*}
		d_{t+1}^y-d_t^y
		&=\nabla_y f(x_{t+1},y_{t+1})-\nabla_y f(x_t,y_t)+v_{t+1}-v_t,\\
		v_{t+1}-v_t+h\gamma v_{t+1}
		&=h\beta\nabla_y f(x_{t+1},y_{t+1})-h\gamma d_{t+1}^y.
	\end{align*}
	Substituting these identities into the expansion and using
	\begin{equation*}
		\frac12\norm{d_{t+1}^y}^2-\ip{d_{t+1}^y}{\widetilde d_t^y}
		=-\frac12\norm{d_{t+1}^y}^2
		-\ip{d_{t+1}^y}{\widetilde d_t^y-d_{t+1}^y},
	\end{equation*}
	we obtain
	\begin{align}
		(1+h\gamma)\mathcal E_{t+1}-\mathcal E_t={}&-\beta\Bigl[
		f(x_{t+1},y_{t+1})-f(x_t,y_t)
		-h\ip{\nabla_y f(x_{t+1},y_{t+1})}{\widetilde d_t^y}\Bigr]
		\notag\\
		&-h\gamma\beta\Bigl[
		f(x_{t+1},y_{t+1})-f^\star
		-\ip{\nabla_y f(x_{t+1},y_{t+1})}{y_{t+1}-y^\star}\Bigr]
		\notag\\
		&+\Bigl[
		\ip{\widetilde d_t^x}{d_{t+1}^x-d_t^x}
		+\ip{\widetilde d_t^y}
		{\nabla_y f(x_{t+1},y_{t+1})-\nabla_y f(x_t,y_t)}
		\Bigr]
		\notag\\
		&+\frac{h\gamma}{2}\norm{d_{t+1}^x}^2
		-\frac{h\gamma}{2}\norm{d_{t+1}^y}^2
		-h\gamma\ip{d_{t+1}^y}{\widetilde d_t^y-d_{t+1}^y}
		\notag\\
		&+h\gamma^2\ip{2\widetilde d_t^y-d_{t+1}^y}{y_{t+1}-y^\star}
		+h\gamma^3\norm{y_{t+1}-y^\star}^2
		\notag\\
		&-\ip{\widetilde d_t^x-d_{t+1}^x}{d_{t+1}^x-d_t^x}
		-\ip{\widetilde d_t^y-d_{t+1}^y}{d_{t+1}^y-d_t^y}
		\notag\\
		&-\frac12\left(
		\norm{d_{t+1}^x-d_t^x}^2+\norm{d_{t+1}^y-d_t^y}^2\right)
		\notag\\
		&-h\gamma\ip{v_{t+1}-v_t}{\widetilde d_t^y}
		-h^2\gamma^2\norm{\widetilde d_t^y}^2.
		\label{eq:combined-expansion}
	\end{align}
	Strong convexity of $f(\cdot,y_t)$ and strong concavity of
	$f(x_{t+1},\cdot)$, followed by completing the square, yield
	\begin{align*}
		&-\beta\Bigl[f(x_{t+1},y_{t+1})-f(x_t,y_t)
		-h\ip{\nabla_y f(x_{t+1},y_{t+1})}{\widetilde d_t^y}\Bigr]
		-\frac12\norm{d_{t+1}^x-d_t^x}^2\\
		\le{}&h\beta\ip{d_t^x}{\widetilde d_t^x}
		-\frac12\norm{d_{t+1}^x-d_t^x}^2
		-\frac{h^2\beta}{2}\left(
		\mu_x\norm{\widetilde d_t^x}^2
		+\mu_y\norm{\widetilde d_t^y}^2\right)\\
		={}&h\beta\norm{d_{t+1}^x}^2
		+h\beta\ip{d_{t+1}^x}{\widetilde d_t^x-d_{t+1}^x}
		-h\beta\ip{d_{t+1}^x-d_t^x}{\widetilde d_t^x}\\
		&-\frac12\norm{d_{t+1}^x-d_t^x}^2
		-\frac{h^2\beta}{2}\left(
		\mu_x\norm{\widetilde d_t^x}^2
		+\mu_y\norm{\widetilde d_t^y}^2\right)\\
		={}&h\beta\norm{d_{t+1}^x}^2
		+h\beta\ip{d_{t+1}^x}{\widetilde d_t^x-d_{t+1}^x}
		-\frac12\norm{d_{t+1}^x-d_t^x+h\beta\widetilde d_t^x}^2\\
		&-\frac{h^2\beta}{2}\Bigl[
		(\mu_x-\beta)\norm{\widetilde d_t^x}^2
		+\mu_y\norm{\widetilde d_t^y}^2\Bigr].
	\end{align*}
	Substituting this bound into \eqref{eq:combined-expansion}, applying
	\eqref{eq:geom3} at $t+1$ and Lemma~\ref{lem:correction-geometry},
	and dropping nonpositive terms using
	\eqref{eq:analysis-parameters} and \eqref{eq:descent-parameters},
	we obtain
	\begin{align}
		(1+h\gamma)\mathcal E_{t+1}-\mathcal E_t\le &h\left(\beta+\frac{\gamma}{2}\right)\norm{d_{t+1}^x}^2
		+h\beta\ip{d_{t+1}^x}{\widetilde d_t^x-d_{t+1}^x}
		-h\mu_x\norm{\widetilde d_t^x}^2
		\notag\\
		&-\frac{h\gamma}{2}\norm{d_{t+1}^y}^2
		-h\gamma\ip{d_{t+1}^y}{\widetilde d_t^y-d_{t+1}^y}
		\notag\\
		&+h\gamma^2\ip{2\widetilde d_t^y-d_{t+1}^y}{y_{t+1}-y^\star}
		+\frac{h\gamma}{2}(2\gamma^2-\beta\mu_y)\norm{y_{t+1}-y^\star}^2
		\notag\\
		&-\frac12\norm{d_{t+1}^x-d_t^x+h\beta\widetilde d_t^x}^2
		-\frac12\norm{d_{t+1}^y-d_t^y}^2
		\notag\\
		&-\ip{\widetilde d_t^x-d_{t+1}^x}{d_{t+1}^x-d_t^x}
		-\ip{\widetilde d_t^y-d_{t+1}^y}{d_{t+1}^y-d_t^y}
		\notag\\
		&-h\gamma\ip{v_{t+1}-v_t}{\widetilde d_t^y}.
		\label{eq:combined-collected}
	\end{align}
	Young's inequality and the parameter bounds give
	\begin{align*}
		&h\left(\beta+\frac{\gamma}{2}\right)\norm{d_{t+1}^x}^2
		+h\beta\ip{d_{t+1}^x}{\widetilde d_t^x-d_{t+1}^x}
		-h\mu_x\norm{\widetilde d_t^x}^2\\
		={}&-h\left(\mu_x-\beta-\frac{\gamma}{2}\right)
		\norm{d_{t+1}^x}^2
		-h(2\mu_x-\beta)
		\ip{d_{t+1}^x}{\widetilde d_t^x-d_{t+1}^x}
		-h\mu_x\norm{\widetilde d_t^x-d_{t+1}^x}^2\\
		\le{}&-h\left(\frac{\mu_x}{2}-\beta-\frac{\gamma}{2}\right)
		\norm{d_{t+1}^x}^2
		+h\mu_x\norm{\widetilde d_t^x-d_{t+1}^x}^2\\
		\le{}&-\frac{h\mu_x}{6}\norm{d_{t+1}^x}^2
		+h\mu_x\norm{\widetilde d_t^x-d_{t+1}^x}^2.
	\end{align*}
	Similarly, completing the square and using the parameter bounds yields
	\begin{align*}
		&-\frac{h\gamma}{2}\norm{d_{t+1}^y}^2
		-h\gamma\ip{d_{t+1}^y}{\widetilde d_t^y-d_{t+1}^y}
		+h\gamma^2\ip{2\widetilde d_t^y-d_{t+1}^y}{y_{t+1}-y^\star}\\
		&\quad+\frac{h\gamma}{2}(2\gamma^2-\beta\mu_y)
		\norm{y_{t+1}-y^\star}^2\\
		={}&-\frac{h\gamma}{4}\norm{d_{t+1}^y}^2
		+h\gamma\norm{\widetilde d_t^y-d_{t+1}^y}^2
		-\frac{h\gamma}{4}
		\norm{2\widetilde d_t^y-d_{t+1}^y
			-2\gamma(y_{t+1}-y^\star)}^2\\
		&\quad+\frac{h\gamma}{2}(4\gamma^2-\beta\mu_y)
		\norm{y_{t+1}-y^\star}^2\\
		\le{}&-\frac{h\gamma}{4}\norm{d_{t+1}^y}^2
		+h\mu_x\norm{\widetilde d_t^y-d_{t+1}^y}^2.
	\end{align*}
	Substituting these estimates into \eqref{eq:combined-collected} gives
	\begin{align}
		(1+h\gamma)\mathcal E_{t+1}-\mathcal E_t
		\le{}&-\frac{h\mu_x}{6}\norm{d_{t+1}^x}^2
		-\frac{h\gamma}{4}\norm{d_{t+1}^y}^2
		+h\mu_x\left(\norm{\widetilde d_t^x-d_{t+1}^x}^2
		+\norm{\widetilde d_t^y-d_{t+1}^y}^2\right)
		\notag\\
		&-\frac12\norm{d_{t+1}^x-d_t^x+h\beta\widetilde d_t^x}^2
		-\frac12\norm{d_{t+1}^y-d_t^y}^2
		\notag\\
		&-\ip{\widetilde d_t^x-d_{t+1}^x}{d_{t+1}^x-d_t^x}
		-\ip{\widetilde d_t^y-d_{t+1}^y}{d_{t+1}^y-d_t^y}
		\notag\\
		&-h\gamma\ip{v_{t+1}-v_t}{\widetilde d_t^y}.
		\label{eq:after-direction-bounds}
	\end{align}

	We next bound the shifted square and the momentum term.
	Young's inequality gives
	\begin{equation*}
		-\frac12\norm{d_{t+1}^x-d_t^x+h\beta\widetilde d_t^x}^2
		\le-\frac38\norm{d_{t+1}^x-d_t^x}^2
		+\frac{3h^2\beta^2}{2}\norm{\widetilde d_t^x}^2.
	\end{equation*}
	For the momentum term, the definition of $d_t^y$, smoothness, and
	the correction updates yield
	\begin{align*}
		\ip{v_{t+1}-v_t}{\widetilde d_t^y}
		={}&\ip{d_{t+1}^y-d_t^y}{\widetilde d_t^y}
		-\ip{\nabla_y f(x_{t+1},y_{t+1})-\nabla_y f(x_t,y_t)}{\widetilde d_t^y}\\
		\ge{}&-\norm{d_{t+1}^y-d_t^y}\norm{\widetilde d_t^y}
		-hL
		\sqrt{\norm{\widetilde d_t^x}^2+\norm{\widetilde d_t^y}^2}
		\norm{\widetilde d_t^y}\\
		\ge{}&-\frac1{8h\gamma}\norm{d_{t+1}^y-d_t^y}^2
		-\frac{hL}{2}\norm{\widetilde d_t^x}^2
		-(hL+2h\gamma)\norm{\widetilde d_t^y}^2.
	\end{align*}
	The last step uses Young's inequality and
	$\sqrt{a^2+b^2}\,b\le a^2/2+b^2$ for $a,b\ge0$.
	Multiplying this bound by $-h\gamma$ and substituting it, together
	with the square bound, into \eqref{eq:after-direction-bounds} gives
	\begin{align}
		(1+h\gamma)\mathcal E_{t+1}-\mathcal E_t
		\le{}&-\frac{h\mu_x}{6}\norm{d_{t+1}^x}^2
		-\frac{h\gamma}{4}\norm{d_{t+1}^y}^2
		+h\mu_x\left(\norm{\widetilde d_t^x-d_{t+1}^x}^2
		+\norm{\widetilde d_t^y-d_{t+1}^y}^2\right)
		\notag\\
		&+\left(\frac{3h^2\beta^2}{2}+\frac{h^2L\gamma}{2}\right)
		\norm{\widetilde d_t^x}^2
		+(h^2L\gamma+2h^2\gamma^2)\norm{\widetilde d_t^y}^2
		\notag\\
		&-\frac38\left(\norm{d_{t+1}^x-d_t^x}^2
		+\norm{d_{t+1}^y-d_t^y}^2\right)
		\notag\\
		&-\ip{\widetilde d_t^x-d_{t+1}^x}{d_{t+1}^x-d_t^x}
		-\ip{\widetilde d_t^y-d_{t+1}^y}{d_{t+1}^y-d_t^y}
		\notag\\
		\le{}&\left(-\frac{h\mu_x}{6}+3h^2\beta^2+h^2L\gamma\right)
		\norm{d_{t+1}^x}^2
		\notag\\
		&+\left[-\frac{h\gamma}{4}+\frac54(h^2L\gamma+2h^2\gamma^2)\right]
		\norm{d_{t+1}^y}^2
		\notag\\
		&+(h\mu_x+3h^2\beta^2+h^2L\gamma)
		\norm{\widetilde d_t^x-d_{t+1}^x}^2
		\notag\\
		&+\left[h\mu_x+5(h^2L\gamma+2h^2\gamma^2)\right]
		\norm{\widetilde d_t^y-d_{t+1}^y}^2
		\notag\\
		&-\frac38\left(\norm{d_{t+1}^x-d_t^x}^2
		+\norm{d_{t+1}^y-d_t^y}^2\right)
		\notag\\
		&-\ip{\widetilde d_t^x-d_{t+1}^x}{d_{t+1}^x-d_t^x}
		-\ip{\widetilde d_t^y-d_{t+1}^y}{d_{t+1}^y-d_t^y},
		\label{eq:after-momentum-bound}
	\end{align}
	where the last inequality uses
	$\norm{r+s}^2\le2\norm r^2+2\norm s^2$ and
	$\norm{r+s}^2\le5\norm r^2/4+5\norm s^2$
	for the $x$- and $y$-direction terms, respectively.
	By \eqref{eq:analysis-parameters} and \eqref{eq:descent-parameters},
	the coefficients of $\norm{d_{t+1}^x}^2$ and
	$\norm{d_{t+1}^y}^2$ in \eqref{eq:after-momentum-bound} are
	nonpositive, and both mismatch coefficients are at most
	$\frac54h\mu_x$. Hence,
	\begin{align}
		(1+h\gamma)\mathcal E_{t+1}-\mathcal E_t
		\le{}&\frac54h\mu_x\left(
		\norm{\widetilde d_t^x-d_{t+1}^x}^2
		+\norm{\widetilde d_t^y-d_{t+1}^y}^2\right)
		\notag\\
		&-\frac38\left(
		\norm{d_{t+1}^x-d_t^x}^2
		+\norm{d_{t+1}^y-d_t^y}^2\right)
		\notag\\
		&-\ip{\widetilde d_t^x-d_{t+1}^x}{d_{t+1}^x-d_t^x}
		-\ip{\widetilde d_t^y-d_{t+1}^y}{d_{t+1}^y-d_t^y}.
		\label{eq:pre-absorb}
	\end{align}
	Finally, applying Lemma~\ref{lem:defect} and Cauchy--Schwarz to
	\eqref{eq:pre-absorb}, and using $h\mu_x\le2/15$, yields
	\eqref{eq:main-descent}, since $1+h\gamma=(1+\alpha)/(2\alpha)$.
\end{proof}
The coercivity and one-step descent estimates imply last-iterate linear
convergence in squared Euclidean distance.

\begin{theorem}\label{thm:linear}
	Suppose that Assumption \ref{ass:main} holds and that the parameters satisfy
	\eqref{eq:analysis-parameters}. Let $(x_t,y_t,v_t)$ be generated by
	Algorithm \ref{alg:main}. Then, for every integer $T\ge0$,
	\begin{equation}\label{eq:linear-rate}
		D_T\le \frac{32L^2}{\mu_y\sqrt{\mu_x\mu_y}}
		\left(1+\frac{h\sqrt{\mu_x\mu_y}}{16}\right)^{-T}D_0.
	\end{equation}
	Consequently, for $0<\varepsilon<1$, any integer $T$ satisfying
	\begin{equation}\label{eq:iteration-complexity}
		T\ge\frac{32}{h\sqrt{\mu_x\mu_y}}
		\log\frac{32L^2}{\mu_y\sqrt{\mu_x\mu_y}\,\varepsilon}
	\end{equation}
	ensures that $(x_T,y_T)$ is an $\varepsilon$-accurate relative solution
	in the sense of Definition~\ref{def:accuracy}, namely
	$D_T\le\varepsilon D_0$.
\end{theorem}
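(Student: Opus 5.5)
The argument chains the one-step descent of Lemma~\ref{lem:descent} into a geometric decay of $\mathcal E_t$. A lower bound on $\mathcal E_T$ (Lemma~\ref{lem:coercivity}) and an upper bound on $\mathcal E_0$ then turn this into the bound on $D_T$.

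\emph{Geometric decay.} Dropping the nonpositive right-hand side of \eqref{eq:main-descent} gives $\mathcal E_{t+1}\le \frac{2\alpha}{1+\alpha}\mathcal E_t=(1+h\gamma)^{-1}\mathcal E_t$. By \eqref{eq:descent-parameters}, $h\gamma\ge h\sqrt{\mu_x\mu_y}/16$. Since $\mathcal E_t\ge0$ by Lemma~\ref{lem:coercivity}, induction yields
\[
\mathcal E_T\le\Bigl(1+\tfrac{h\sqrt{\mu_x\mu_y}}{16}\Bigr)^{-T}\mathcal E_0 .
\]

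\emph{Lower bound at $T$.} Lemma~\ref{lem:coercivity} together with $\mu_x\ge\mu_y$ gives $\mathcal E_T\ge\frac{\sqrt{\mu_x\mu_y}}{32}\mu_y D_T$.

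\emph{Upper bound at $t=0$.} Here $v_0=0$, so
\[
\mathcal E_0=-\beta(f(x_0,y_0)-f^\star)+\tfrac12\norm{\nabla f(x_0,y_0)}^2+\gamma^2\norm{y_0-y^\star}^2 .
\]
By smoothness and $\nabla f(x^\star,y^\star)=0$, $\tfrac12\norm{\nabla f(x_0,y_0)}^2\le \tfrac{L^2}{2}D_0$. For the function-value term, $-(f(x_0,y_0)-f^\star)=(f^\star-f(x_0,y^\star))+(f(x_0,y^\star)-f(x_0,y_0))$. The first bracket is $\le0$ by strong convexity in $x$ at the minimizer $x^\star$ of $f(\cdot,y^\star)$. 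For the second bracket, $L$-smoothness in $y$ gives
\[
f(x_0,y^\star)-f(x_0,y_0)\le\ip{\nabla_y f(x_0,y_0)}{y^\star-y_0}+\tfrac L2\norm{y_0-y^\star}^2 .
\]
Since $\norm{\nabla_y f(x_0,y_0)}\le L\sqrt{D_0}$, this is $\le \tfrac{3L}{2}D_0$. Alternatively one can write $f(x_0,y_0)-f^\star$ as an integral of $\nabla f$ along the segment to $(x^\star,y^\star)$, which gives $|f(x_0,y_0)-f^\star|\le \tfrac L2 D_0$ directly. From \eqref{eq:descent-parameters}, \eqref{eq:analysis-parameters} and $h\le 2/(15L)$, we have $\beta\lesssim \mu_x$ and $\gamma^2\le \mu_x\mu_y/64$. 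Hence
\[
\mathcal E_0\le cL^2D_0
\]
for an absolute constant $c$, and one checks $c\le1$, using $\alpha\ge 4/(4+h\sqrt{\mu_x\mu_y})$ and hence $1/\alpha\le 1+1/30$.

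\emph{Conclusion.} Combining the three bounds gives
\[
D_T\le \frac{32}{\mu_y\sqrt{\mu_x\mu_y}}\,L^2\Bigl(1+\tfrac{h\sqrt{\mu_x\mu_y}}{16}\Bigr)^{-T}D_0,
\]
which is \eqref{eq:linear-rate}. For the complexity claim, use $\log(1+s)\ge s/2$ for $s=h\sqrt{\mu_x\mu_y}/16\le1$. This shows \eqref{eq:iteration-complexity} forces the factor $(1+s)^{-T}$ below $\varepsilon\,\mu_y\sqrt{\mu_x\mu_y}/(32L^2)$, so $D_T\le\varepsilon D_0$.

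\emph{Main obstacle.} The main obstacle is the bookkeeping in the bound $\mathcal E_0\le L^2D_0$: keeping the constant at exactly one. This requires a careful bound on $-\beta(f(x_0,y_0)-f^\star)$ and on the $\gamma^2$ term. The term $\beta\cdot\tfrac{3L}{2}$ is at most about $0.2\mu_x L$, which is small against the slack $\tfrac12L^2$ left in the gradient bound, so I expect it to close.
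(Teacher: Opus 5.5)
Your proposal is correct and follows the paper's proof essentially step for step: coercivity at $T$ with $\mu_x\ge\mu_y$; iterating the dropped descent inequality using $1+h\gamma=(1+\alpha)/(2\alpha)\ge1+h\sqrt{\mu_x\mu_y}/16$; the bound $\mathcal E_0\le L^2D_0$ from $\norm{\nabla f(x_0,y_0)}^2\le L^2D_0$ and the integral estimate $|f(x_0,y_0)-f^\star|\le\frac L2D_0$, which is exactly the estimate the paper uses; and finally $\log(1+s)\ge s/2$. One small numerical slip: since $\beta\le\frac{61}{240}\mu_x$, your cruder term $\beta\cdot\frac{3L}{2}$ can reach $\frac{183}{480}\mu_xL\approx0.38\mu_xL$ (not $0.2\mu_xL$) when $\mu_y\approx\mu_x$, but $\frac12+0.38+\frac1{64}<1$, so your claim $c\le1$ still holds.
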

\begin{proof}
	By Lemma~\ref{lem:coercivity} and $\mu_x\ge\mu_y$,
	\begin{equation*}
		\mathcal E_t\ge\frac{\sqrt{\mu_x\mu_y}}{32}
		\bigl(\mu_x\norm{x_t-x^\star}^2+\mu_y\norm{y_t-y^\star}^2\bigr)
		\ge\frac{\mu_y\sqrt{\mu_x\mu_y}}{32}D_t\ge0.
	\end{equation*}
	Dropping the nonpositive right-hand side of \eqref{eq:main-descent}
	gives
	\begin{equation}\label{eq:linear-contraction}
		\frac{1+\alpha}{2\alpha}\mathcal E_{t+1}\le\mathcal E_t.
	\end{equation}
	Iterating this inequality and using the upper bound on $\alpha$ in
	\eqref{eq:analysis-parameters} yields
	\begin{equation*}
		\mathcal E_T\le
		\left(\frac{1+\alpha}{2\alpha}\right)^{-T}\mathcal E_0
		\le\left(1+\frac{h\sqrt{\mu_x\mu_y}}{16}\right)^{-T}\mathcal E_0.
	\end{equation*}
	Since $\nabla f(x^\star,y^\star)=0$, smoothness implies
	\begin{equation*}
		\norm{\nabla f(x_0,y_0)}^2\le L^2D_0,
		\qquad
		\bigl|f(x_0,y_0)-f^\star\bigr|\le\frac L2D_0.
	\end{equation*}
	Substituting these bounds and $v_0=0$ into \eqref{eq:lyapunov}, and
	using \eqref{eq:analysis-parameters}, gives
	\begin{equation*}
		\mathcal E_0\le
		\left[\frac{L^2}{2}+\frac{(1-\alpha+2\rho)L}{4\alpha h}
		+\frac{(1-\alpha)^2}{4\alpha^2h^2}\right]D_0
		\le L^2D_0.
	\end{equation*}
	Combining these estimates proves \eqref{eq:linear-rate}.
	Finally, since $0<h\sqrt{\mu_x\mu_y}/16<1$, applying
	$\log(1+s)\ge s/2$ for $0\le s\le1$ to \eqref{eq:linear-rate}
	shows that \eqref{eq:iteration-complexity} suffices to ensure
	$D_T\le\varepsilon D_0$.
\end{proof}
\begin{remark}
	Choosing $h=\Theta(1/L)$ subject to \eqref{eq:analysis-parameters},
	the bound \eqref{eq:iteration-complexity} yields an iteration complexity of
	\begin{equation*}
		\mathcal{O}\!\left(\frac{L}{\sqrt{\mu_x\mu_y}}
		\log\frac{2\kappa_x\kappa_y}{\varepsilon}\right)
		=\mathcal{O}\!\left(\sqrt{\kappa_x\kappa_y}
		\log\frac{2\kappa_x\kappa_y}{\varepsilon}\right),
	\end{equation*}
	where $\kappa_x:=L/\mu_x$ and $\kappa_y:=L/\mu_y$.
	The logarithmic factor follows from
	$32L^2/(\mu_y\sqrt{\mu_x\mu_y})
	=32\kappa_x\kappa_y\sqrt{\kappa_y/\kappa_x}
	\le32(\kappa_x\kappa_y)^{3/2}$
	and $\kappa_x,\kappa_y\ge1$.
	Each iteration of Algorithm \ref{alg:main} requires a constant number of new
	full-gradient evaluations. Thus, the full-gradient oracle complexity
	for obtaining an $\varepsilon$-accurate relative solution has the same
	order.
\end{remark}

\section{Numerical experiments}\label{sec:experiments}

We evaluate PCE-DM on regularized linear regression and AUC
maximization. The baselines comprise three single-loop methods:
gradient descent--ascent (GDA), extragradient (EG), and optimistic
gradient descent--ascent (OGDA), together with two nested methods:
Catalyst-EG~\cite[Chapter~2]{yang2023thesis} and
FOAM~\cite{kovalev2022}.
We measure convergence by the relative squared distance $D_t/D_0$
and compare computational cost in terms of full-gradient queries
and computation time.

For each problem instance, all methods start from the same initial
point, and each method uses the best-performing configuration found
in its respective parameter search. Full-gradient query counts include
initialization and all inner-loop evaluations. Computation time includes
initialization, gradient evaluations, variable updates, and required
inner stopping checks, but excludes outer diagnostic evaluations,
logging, and data preparation. Each reported time is measured in a
single run for the corresponding method and instance.

\subsection{Regularized linear regression}\label{subsec:exp-ridge}
We consider the unconstrained saddle formulation of ridge regression,
\begin{equation}\label{eq:exp-ridge}
	\min_{x\in\R^{50}}\max_{y\in\R^{50}}
	\frac{\mu_x}{2}\norm{x}^2+y^\top Bx-c^\top y
	-\frac{\mu_y}{2}\norm{y}^2.
\end{equation}
We randomly generate $B\in\R^{50\times50}$ and set
$\mu_x=1$ and $\mu_y=1/r$, where
$r\in\{1,10,100,1000,10000\}$ controls the curvature imbalance.

Figure~\ref{fig:ridge} shows convergence against full-gradient queries,
and Table~\ref{tab:ridge-queries} reports the cost of reaching
$D_t/D_0\le10^{-6}$.
GDA requires the fewest queries and the least time at $r=1$.
EG and OGDA use more queries than GDA at every tested ratio.
Among the nested methods, Catalyst-EG improves on GDA's query count
only at $r=10000$, while FOAM becomes the most query-efficient
baseline at $r=1000$ and $r=10000$.

PCE-DM achieves the lowest query count and measured computation time
for every tested $r\ge10$. Its query advantage over the best baseline
increases across these ratios. At $r=10000$, PCE-DM requires
$1381$ queries and $36.025$~ms, compared with $11987$ queries and
$468.783$~ms for FOAM, the best baseline on both metrics.
This corresponds to approximately $88.5\%$ fewer queries and a
$13.0$-fold speedup in the recorded run. These results suggest that
PCE-DM is less sensitive to curvature imbalance on this test family.
\begin{figure}
	\centering
	\includegraphics[width=0.94\linewidth]{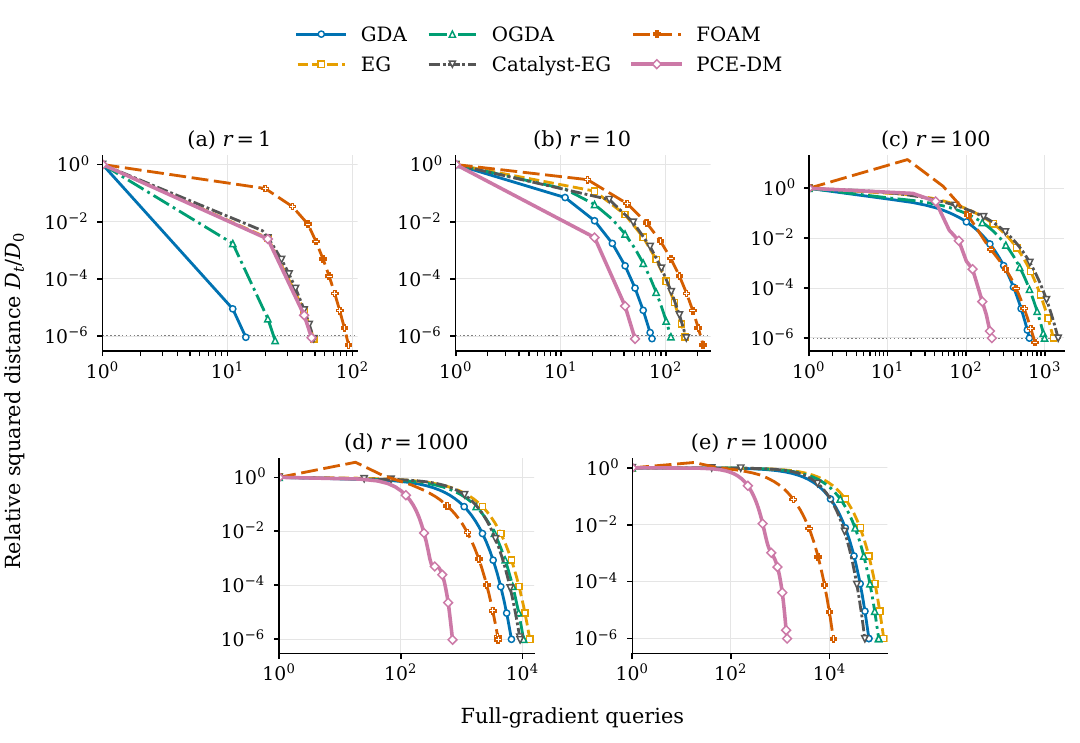}
	\caption{Regularized linear regression:
		relative squared distance $D_t/D_0$ versus full-gradient queries.
		Panels (a)--(e) correspond to $r=1,10,100,1000,10000$.
		Both axes use logarithmic scales; dotted lines indicate
		$D_t/D_0=10^{-6}$.}
	\label{fig:ridge}
\end{figure}
\begin{table}[!htbp]
	\centering
	\caption{Full-gradient queries and computation time (ms) required to
		reach $D_t/D_0\le10^{-6}$ in regularized linear regression.
		Both metrics include initialization and all inner-loop work.
		Bold entries indicate the smallest value in each row.}
	\label{tab:ridge-queries}
	\small
	\begin{tabular*}{\linewidth}{@{\extracolsep{\fill}}rlrrrrrr@{}}
		\toprule
		$r$ & Metric & GDA & EG & OGDA & Catalyst-EG & FOAM & PCE-DM\\
		\midrule
		1     & Queries & \textbf{14} & 49 & 24 & 49 & 93 & 47\\
		& Time & \textbf{0.305} & 1.345 & 0.629 & 1.780 & 6.146 & 2.192\\
		\addlinespace[2pt]
		10    & Queries & 74 & 153 & 112 & 157 & 227 & \textbf{51}\\
		& Time & 1.418 & 2.830 & 2.619 & 4.009 & 9.313 & \textbf{1.207}\\
		\addlinespace[2pt]
		100   & Queries & 639 & 1289 & 996 & 1483 & 752 & \textbf{213}\\
		& Time & 13.496 & 26.155 & 23.985 & 56.492 & 34.116 & \textbf{4.796}\\
		\addlinespace[2pt]
		1000  & Queries & 6593 & 13199 & 10417 & 9175 & 4002 & \textbf{709}\\
		& Time & 132.159 & 255.474 & 244.055 & 251.218 & 178.139 & \textbf{16.804}\\
		\addlinespace[2pt]
		10000 & Queries & 62835 & 125683 & 99210 & 52057 & 11987 & \textbf{1381}\\
		& Time & 1241.479 & 2522.581 & 2401.702 & 1354.875 & 468.783 & \textbf{36.025}\\
		\bottomrule
	\end{tabular*}
\end{table}
\subsection{AUC maximization}\label{subsec:exp-auc}

We consider the regularized AUC saddle model of
Liu and Luo~\cite[Section~5.1]{liuluo2022}.
Given labeled samples $(a_i,b_i)\in\R^m\times\{-1,1\}$, define
$p=N^{-1}\sum_{i=1}^N\mathbf{1}_{\{b_i=1\}}$ and
$z=(w,u,v)\in\R^{m+2}$. The model is
\begin{equation}\label{eq:exp-auc}
	\begin{aligned}
		\min_{z\in\R^{m+2}}\max_{s\in\R}\quad
		F(z,s)={}&\frac{\lambda}{2}\norm{z}^2-p(1-p)s^2\\
		&+\frac{p}{N}\sum_{i:b_i=-1}\bigl[(a_i^\top w-v)^2+2(1+s)a_i^\top w\bigr]\\
		&+\frac{1-p}{N}\sum_{i:b_i=1}\bigl[(a_i^\top w-u)^2-2(1+s)a_i^\top w\bigr],
	\end{aligned}
\end{equation}
with $\lambda=100/N$.
We use the complete official LIBSVM training files for a9a
($N=32561$, $m=123$) and cod-rna ($N=59535$, $m=8$).
No additional feature preprocessing is applied to a9a; each cod-rna
feature is divided by its maximum absolute value over the training
samples. For this experiment, we measure convergence by the full-gradient
norm $\norm{\nabla F(z,s)}$ and report the cost of reaching
$\norm{\nabla F(z,s)}\le10^{-6}$.

Figure~\ref{fig:auc} and Table~\ref{tab:auc-results} show that all six
methods reach the prescribed tolerance on both datasets.
PCE-DM requires the fewest full-gradient queries and the least measured
computation time, followed by GDA. Catalyst-EG outperforms EG and OGDA
on both metrics, but neither nested method improves on GDA or PCE-DM.
FOAM uses slightly fewer queries than Catalyst-EG on a9a but takes
more time; on cod-rna, it requires both more queries and more time.

Compared with GDA, the best baseline on both datasets, PCE-DM reduces
the query count from $9593$ to $1585$ on a9a and from $8588$ to $2789$
on cod-rna, corresponding to reductions of approximately $83.5\%$
and $67.5\%$. The measured times decrease from $239.592$ to
$40.394$~ms and from $186.133$ to $67.873$~ms, respectively.
These correspond to speedups of approximately $5.9$ and $2.7$ in
the recorded runs, showing that PCE-DM's query savings also translate
into lower computation time on these two instances.
\begin{figure}
	\centering
	\includegraphics[width=\linewidth]{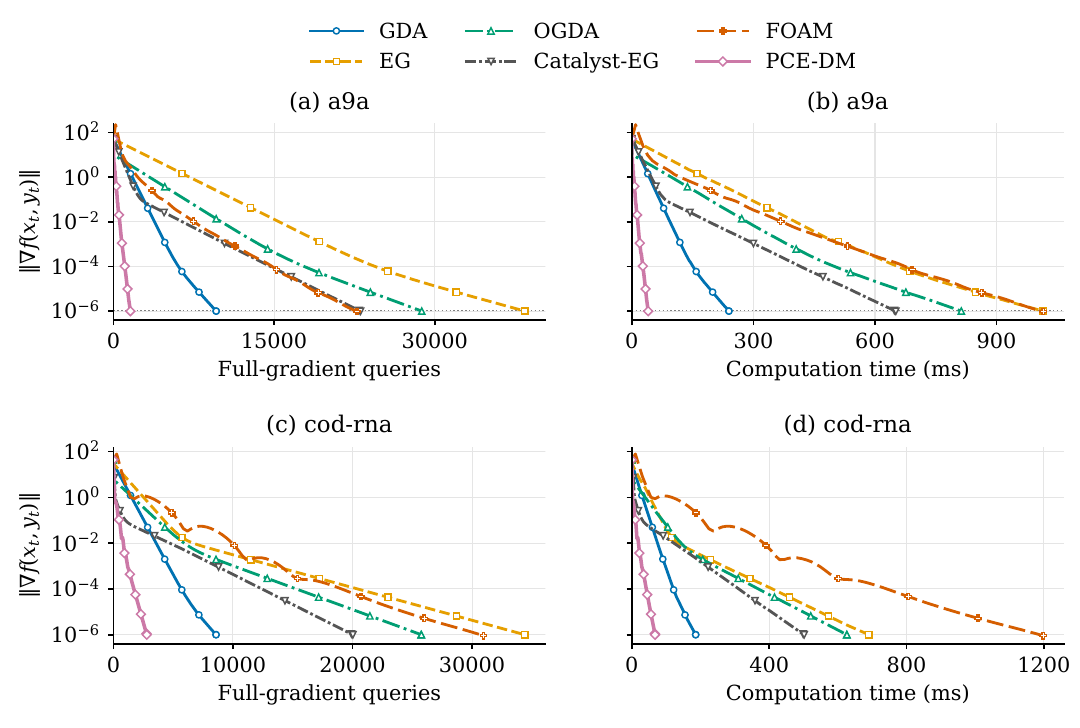}
	\caption{AUC maximization on a9a (top) and cod-rna (bottom):
		full-gradient norm versus full-gradient queries (left) and computation
		time (right). All methods use the same objective and initial point
		within each dataset. The vertical axes use logarithmic scales;
		dotted lines indicate the tolerance $10^{-6}$.}
	\label{fig:auc}
\end{figure}
\begin{table}[!htbp]
	\centering
	\caption{AUC maximization: full-gradient queries and computation
		time (ms) required to reach $\norm{\nabla F(z,s)}\le10^{-6}$.
		Both metrics include initialization and all inner-loop work.
		Bold entries indicate the smallest value in each column.}
	\label{tab:auc-results}
	\small
	\begin{tabular*}{\linewidth}{@{\extracolsep{\fill}}lrrrr@{}}
		\toprule
		& \multicolumn{2}{c}{a9a} & \multicolumn{2}{c}{cod-rna}\\
		\cmidrule(lr){2-3}\cmidrule(l){4-5}
		Method & Queries & Time (ms) & Queries & Time (ms)\\
		\midrule
		GDA         & 9593          & 239.592         & 8588          & 186.133\\
		EG          & 38401         & 1015.060        & 34411         & 690.528\\
		OGDA        & 28782         & 812.979         & 25760         & 626.530\\
		Catalyst-EG & 23065         & 651.284         & 20029         & 501.871\\
		FOAM        & 22744         & 1015.045        & 30964         & 1198.368\\
		PCE-DM      & \textbf{1585} & \textbf{40.394} & \textbf{2789} & \textbf{67.873}\\
		\bottomrule
	\end{tabular*}
\end{table}

\section{Conclusion}\label{sec:conclusion}

PCE-DM achieves last-iterate linear convergence for smooth strongly
convex--strongly concave minimax problems with general nonlinear
coupling. Its fixed explicit updates require two new full-gradient
evaluations per iteration and attain the optimal condition-number
dependence up to logarithmic factors, with oracle complexity
$\mathcal{O}(\sqrt{\kappa_x\kappa_y}\log(2\kappa_x\kappa_y/\eps))$.
The analysis uses the shared feedback history to control the
predictor--corrector mismatch, which is absorbed by negative
increment terms in a coercive Lyapunov estimate.

PCE-DM uses the fewest gradient queries and the least measured time
on the tested regression instances with unequal curvatures and on
both AUC datasets, while GDA performs best in the balanced regression
case. These results indicate particular benefits under curvature
imbalance. Future work includes adaptive parameter selection, extensions to
constrained and stochastic problems, and experiments with nonlinear
coupling and larger datasets.

\bmhead{Acknowledgments}
This work was supported by the National Key R\&D Program of China (Grant Nos.~2025YFA1017801 and 2025YFA1017800) and the National Natural
Science Foundation of China (Grant No.~12471294).
%
%

\section*{Declarations}
%
%
\begin{itemize}
	\item {\bf Conflict of interest} The authors have no relevant financial or non-financial interests to disclose.
	\item {\bf Data availability} Data sharing not applicable to this article as no datasets were generated or analyzed during the current study.
\end{itemize}

\end{document}